\newtheorem{dummy}{Dummy}[section]              
\newtheorem{proposition}[dummy]{Proposition}
\Crefname{proposition}{Proposition}{Propositions}
\Crefname{lemma}{Lemma}{Lemmas}
\newtheorem{theorem}[dummy]{Theorem}
\Crefname{theorem}{Theorem}{Theorems}
\newtheorem{corollary}[dummy]{Corollary}
\theoremstyle{definition}
\newtheorem{definition}[dummy]{Definition}
\newtheorem{remark}[dummy]{Remark}
\newtheorem{property}[dummy]{Property}
\newcolumntype{Y}{>{\centering\arraybackslash}X}
\newcolumntype{C}[1]{>{\centering\let\newline\\\arraybackslash\hspace{0pt}}p{#1}}
\newcommand{\e}[1][]{\ensuremath{\mathds{E}_{#1}}\xspace}    
\newcommand{\pr}[1][]{\ensuremath{\mathds{P}_{#1}}\xspace}   
\newcommand{\abs}[1][\cdot]{\ensuremath{\left| {#1} \right|}\xspace}        
\newcommand{\subexponentials}{\ensuremath{ \mathcal{S} }\xspace} 
\newcommand{\equalindistribution}{\ensuremath{\stackrel{\mathfrak{D}}{=}}\xspace}					
\newcommand{\indicatorfunction}[1][]{\ensuremath{\mathds{1}_{#1}}\xspace} 
\newcommand{\var}{\ensuremath{\mathrm{Var}}\xspace} 
\newcommand{\cov}{\ensuremath{\mathrm{Cov}}\xspace} 
\newcommand{\corr}{\ensuremath{\mathrm{Corr}}\xspace} 
\newcommand{\bernoullirv}{\ensuremath{I}\xspace} 
\newcommand{\naturals}[1][]{\ensuremath{\mathds{N}_{#1}}\xspace}        
\newcommand{\initialcapital}{\ensuremath{u}\xspace}        
\newcommand{\arrivalrate}{\ensuremath{\lambda}\xspace}            
\newcommand{\perturbationparameter}{\ensuremath{\epsilon}\xspace}                    
\newcommand{\riskreservemix}[1][t]{\ensuremath{R(#1)}\xspace}       			
\newcommand{\claimsurplusmix}[1][t]{\ensuremath{S(#1)}\xspace}       			
\newcommand{\poissonprocessmix}[1][t]{\ensuremath{N(#1)}\xspace}       		
\newcommand{\supremumsurplus}{\ensuremath{M}\xspace}           
\newcommand{\supremumsurplusmix}{\ensuremath{\supremumsurplus}\xspace}           
\newcommand{\newsupremum}{\ensuremath{V}\xspace}     
\newcommand{\newsupremumtruncate}[1][n]{\ensuremath{V_{#1}}\xspace}     
\newcommand{\simnewsupremum}[1][i]{\ensuremath{\newsupremum^{(#1)}}\xspace}           
\newcommand{\simnewsupremumtruncate}[2][i]{\ensuremath{\newsupremum^{(#1)}_{#2}}\xspace}           
\newcommand{\summand}[1][k]{\ensuremath{X_{#1}}\xspace}                     
\newcommand{\zerosummand}{\ensuremath{X_0^\star}\xspace}                     
\newcommand{\firstsummand}{\ensuremath{X_1^\star}\xspace}                     
\newcommand{\summation}[1][\ell]{\ensuremath{S_{#1}}\xspace}                     
\newcommand{\maxzerosummand}[1][k]{\ensuremath{m_{-#1}^\star}\xspace}
\newcommand{\maxsummand}[1][k]{\ensuremath{m_{#1}}\xspace}          
\newcommand{\estimator}{\ensuremath{Z^\star(\initialcapital)}\xspace}          
\newcommand{\firstsummanddistributioncomplementsymbol}{\ensuremath{\overline{F_{\firstsummand}}}\xspace}     
\newcommand{\summanddistributioncomplementsymbol}{\ensuremath{\overline{F}_{\summand[]}}\xspace}     
\newcommand{\excessclaimsizemixrv}{\ensuremath{\claimsizerv^e}\xspace}                     
\newcommand{\supremumsurplusdiscardsymbol}{\ensuremath{\supremumsurplus^\bullet}\xspace}           
\newcommand{\supremumsurplusdiscard}[1][k]{\ensuremath{\supremumsurplus^\bullet_{#1}}\xspace}           
\newcommand{\claimsizerv}{\ensuremath{U}\xspace}                     
\newcommand{\claimsizemixrv}{\ensuremath{\claimsizerv}\xspace}                     
\newcommand{\claimsizemix}[1][k]{\ensuremath{\claimsizerv_{#1}}\xspace}     
\newcommand{\claimsizedistributionsymbol}{\ensuremath{G}\xspace}     
\newcommand{\claimsizedistributionmixsymbol}{\ensuremath{\claimsizedistributionsymbol}\xspace}     
\newcommand{\meanclaimsizemix}{\ensuremath{\e\claimsizemixrv}\xspace}     
\newcommand{\phclaimsizedistributioncomplement}[1][\initialcapital]{\ensuremath{\overline{\phclaimsizedistributionsymbol}(#1)}\xspace}     
\newcommand{\excessclaimsizedistributionmixcomplement}[1][\initialcapital]{\ensuremath{\overline{\claimsizedistributionmixsymbol^e}(#1)}\xspace}     
\newcommand{\phclaimsizerv}{\ensuremath{B}\xspace}                                      
\newcommand{\phclaimsize}[1][k]{\ensuremath{\phclaimsizerv_{#1}}\xspace}     
\newcommand{\meanphclaimsize}{\ensuremath{\mu_B}\xspace}     
\newcommand{\excessphclaimsizerv}{\ensuremath{\phclaimsizerv^e}\xspace}        
\newcommand{\excessphclaimsize}[1][k]{\ensuremath{\excessphclaimsizerv_{#1}}\xspace}        
\newcommand{\phclaimsizedistributionsymbol}{\ensuremath{F_p}\xspace}     
\newcommand{\phclaimsizedistribution}[1][x]{\ensuremath{\phclaimsizedistributionsymbol(#1)}\xspace}     
\newcommand{\ltexcessphclaimsizedistribution}[1][s]{\ensuremath{\widetilde{F}^e_p(#1)}\xspace}     
\newcommand{\excessphclaimsizedistributioncomplement}[1][\initialcapital]{\ensuremath{\overline{\phclaimsizedistributionsymbol^e}(#1)}\xspace}     
\newcommand{\htclaimsizerv}{\ensuremath{C}\xspace}                                      
\newcommand{\htclaimsize}[1][k]{\ensuremath{\htclaimsizerv_{#1}}\xspace}                
\newcommand{\meanhtclaimsize}{\ensuremath{\mu_C}\xspace}     
\newcommand{\excesshtclaimsizerv}{\ensuremath{\htclaimsizerv^e}\xspace}        
\newcommand{\excesshtclaimsize}[1][k]{\ensuremath{\excesshtclaimsizerv_{#1}}\xspace}        
\newcommand{\htclaimsizedistributionsymbol}{\ensuremath{F_h}\xspace}     
\newcommand{\ltexcesshtclaimsizedistribution}[1][s]{\ensuremath{\widetilde{F}^e_h(#1)}\xspace}     
\newcommand{\htclaimsizedistributioncomplement}[1][\initialcapital]{\ensuremath{\overline{\htclaimsizedistributionsymbol}(#1)}\xspace}     
\newcommand{\excesshtclaimsizedistribution}[1][\initialcapital]{\ensuremath{\htclaimsizedistributionsymbol^e(#1)}\xspace}     
\newcommand{\excesshtclaimsizedistributioncomplement}[1][\initialcapital]{\ensuremath{\overline{\htclaimsizedistributionsymbol^e}(#1)}\xspace}     
\newcommand{\claimrate}{\ensuremath{\rho}\xspace}                    
\newcommand{\claimratemix}{\ensuremath{\claimrate}\xspace}                    
\newcommand{\discardclaimrate}{\ensuremath{\claimrate^\bullet}\xspace}                    
\newcommand{\phclaimrate}{\ensuremath{\delta}\xspace}                    
\newcommand{\htclaimrate}{\ensuremath{\theta}\xspace}                    
\newcommand{\ruinsymbol}{\ensuremath{\psi}\xspace}       
\newcommand{\ruinmix}[1][\initialcapital]{\ensuremath{\ruinsymbol(#1)}\xspace}                
\newcommand{\remainingterms}[1][\initialcapital]{\ensuremath{\varphi(#1)}\xspace}                
\newcommand{\pkremainingterms}[1][\initialcapital]{\ensuremath{\varphi^\circ(#1)}\xspace}                
\newcommand{\approxruinmix}[2][\initialcapital]{\ensuremath{\ruinsymbol_{#2}(#1)}\xspace}                
\newcommand{\approxremainingterms}[2][\initialcapital]{\ensuremath{\varphi_{#2}(#1)}\xspace}                
\newcommand{\approxremainingtermsPK}[2][\initialcapital]{\ensuremath{\varphi_{#2}^{\circ}(#1)}\xspace}                
\newcommand{\akapproxremainingterms}[1][\initialcapital]{\ensuremath{\varphi^{\star}(#1)}\xspace}     
\newcommand{\simnumber}{\ensuremath{\kappa}\xspace}  
\newcommand{\ruindiscard}[1][\initialcapital]{\ensuremath{\ruinsymbol^\bullet(#1)}\xspace}        
\newcommand{\excessclaimsizedistributionsymbol}{\ensuremath{\claimsizedistributionsymbol^e}\xspace}     
\newcommand{\excessclaimsizedistributionmixsymbol}{\ensuremath{\excessclaimsizedistributionsymbol}\xspace}     
\newcommand{\excessclaimsizedistributionmix}[1][\initialcapital]{\ensuremath{\excessclaimsizedistributionmixsymbol(#1)}\xspace}     
\newcommand{\excessclaimsizedistributionmixconvolution}[2][\initialcapital]{\ensuremath{(\excessclaimsizedistributionmixsymbol)^{* #2}(#1)}\xspace}     
\newcommand{\claimsizedistributionmix}[1][x]{\ensuremath{\claimsizedistributionmixsymbol(#1)}\xspace}     
\newcommand{\excessclaimsizemix}[1][k]{\ensuremath{\claimsizerv_{#1}^e}\xspace}     
\newcommand{\paretoshape}{\ensuremath{a}\xspace}    
\newcommand{\paretoscale}{\ensuremath{b}\xspace}    
\newcommand{\exponentialrate}{\ensuremath{\mu}\xspace}    
\newcommand{\ltexcessclaimsizedistributionmix}[1][s]{\ensuremath{\widetilde{\claimsizedistributionsymbol}^e(#1)}\xspace}     
\newcommand{\convolutiondiscard}[2][\initialcapital]{\ensuremath{\mathscr{A}_{#2}(#1)}\xspace}                      
\newcommand{\geometric}{\ensuremath{N}\xspace}  
\newcommand{\knownrv}{\ensuremath{W(\initialcapital)}\xspace}           
\newcommand{\truncknownrv}[1][n]{\ensuremath{W_{#1}(\initialcapital)}\xspace}           
\newcommand{\akknownrv}{\ensuremath{W^\star(\initialcapital)}\xspace}           
\newcommand{\known}[1][i]{\ensuremath{W^{(#1)}(\initialcapital)}\xspace}           
\newcommand{\unknownrv}{\ensuremath{Z(\initialcapital)}\xspace}           
\newcommand{\unknown}[1][i]{\ensuremath{Z^{(#1)}(\initialcapital)}\xspace}           
\newcommand{\akunknownrv}{\ensuremath{Z^\star(\initialcapital)}\xspace} 
\newcommand{\empiricalmeanknown}[1][\simnumber]{\ensuremath{\hat{w}_{#1}(\initialcapital)}\xspace}           
\newcommand{\empiricalmeanknownPK}[1][\simnumber]{\ensuremath{\hat{w}_{#1}^{\circ}(\initialcapital)}\xspace}           
\newcommand{\akempiricalmeanknown}[1][\simnumber]{\ensuremath{\hat{w}^\star_{#1}(\initialcapital)}\xspace}           
\newcommand{\empiricalmeanunknown}[1][\simnumber]{\ensuremath{\hat{z}_{#1}(\initialcapital)}\xspace}           
\newcommand{\empiricalmeanunknownPK}[1][\simnumber]{\ensuremath{\hat{z}_{#1}^{\circ}(\initialcapital)}\xspace}           
\newcommand{\akempiricalmeanunknown}[1][\simnumber]{\ensuremath{\hat{z}^\star_{#1}(\initialcapital)}\xspace}           
\newcommand{\empiricalcovariance}[1][\simnumber]{\ensuremath{\hat{\alpha}_{#1}}\xspace}           
\newcommand{\empiricalcovariancePK}[1][\simnumber]{\ensuremath{\hat{\alpha}_{#1}^{\circ}}\xspace}           
\newcommand{\akempiricalcovariance}[1][\simnumber]{\ensuremath{\hat{\alpha}^\star_{#1}}\xspace}           
\newcommand{\estimapproxremainingterms}[3][\initialcapital]{\ensuremath{\hat{\varphi}_{#3}^{#2}(#1)}\xspace}                
\newcommand{\estimapproxremainingtermsPK}[3][\initialcapital]{\ensuremath{\hat{\varphi}_{#3}^{\circ,#2}(#1)}\xspace}                
\newcommand{\estimremainingterms}[2][\initialcapital]{\ensuremath{\hat{\varphi}_{#2}(#1)}\xspace}   
\newcommand{\akapproxruinmix}[2][\initialcapital]{\ensuremath{\hat{\ruinsymbol}_{#2}^{\star}(#1)}\xspace}                
\newcommand{\correlation}[1][\initialcapital]{\ensuremath{r_n(#1)}\xspace}           
\newcommand{\examplecdfsymbol}{\ensuremath{F}\xspace}     
\newcommand{\examplecdf}[1][\initialcapital]{\ensuremath{\examplecdfsymbol(#1)}\xspace}     
\newcommand{\exampleccdf}[1][\initialcapital]{\ensuremath{\overline{\examplecdfsymbol}(#1)}\xspace}     
\newcommand{\pkknownrv}{\ensuremath{W^{\circ}_n(\initialcapital)}\xspace}           
\newcommand{\pkakknownrv}{\ensuremath{W^{\circ,\star}(\initialcapital)}\xspace}           
\newcommand{\pkunknownrv}{\ensuremath{Z^\circ(\initialcapital)}\xspace}   
\newcommand{\pkakunknownrv}{\ensuremath{Z^{\circ,\star}(\initialcapital)}\xspace}  
\newcommand{\pknewsupremum}{\ensuremath{V^\circ}\xspace}
\newcommand{\pknewsupremumtruncate}[1][n]{\ensuremath{V^\circ_{#1}}\xspace}  
\newcommand{\pkgeometric}{\ensuremath{N^\circ}\xspace}  
\newcommand{\pksummation}[1][n]{\ensuremath{S^\circ_{#1}}\xspace}           
\newcommand{\pkmaxsummand}[1][k]{\ensuremath{m^\circ_{#1}}\xspace}          
\newcommand{\footnoteremember}[2]
{
   \newcounter{#1}\footnote{#2}\setcounter{#1}{\value{footnote}}
}
\newcommand{\footnoterecall}[1]
{
   \footnotemark[\value{#1}]
}
\title{Efficient simulation of ruin probabilities when claims are mixtures of heavy and light tails}
\author{
    Hansj\"{o}rg Albrecher\footnoteremember{UNIL}{The Faculty of Business and Economics, University of Lausanne, Quartier UNIL-Chamberonne B\^{a}timent Extranef, 1015 Lausanne, Switzerland}\\
    \small \texttt{hansjoerg.albrecher@unil.ch}\\
    \and
    Martin Bladt\footnoterecall{UNIL}\\
    \small \texttt{martin.bladt@unil.ch}\\
    \and
    Eleni Vatamidou\footnoterecall{UNIL}\\
    \small \texttt{eleni.vatamidou@unil.ch}\\
}
\date{}
\begin{document}

\maketitle

\begin{abstract}
    We consider the classical Cram\'{e}r\-/Lundberg risk model with  claim sizes that are mixtures of phase\-/type and subexponential variables. Exploiting a specific geometric compound representation, we propose control variate techniques to efficiently simulate the ruin probability in this situation. The resulting estimators perform well for both small and large initial capital. We quantify the variance reduction as well as the efficiency gain of our method over another fast standard technique based on the classical Pollaczek\-/Khinchine formula. We provide a numerical example to illustrate the performance, and show that for more time\-/consuming conditional Monte Carlo techniques, the new series representation also does not compare unfavorably to the one based on the Pollaczek\-/Khinchine formula. \\
    
    \noindent {\bf Keywords:} rare event simulation; ruin probability; Cram\'{e}r-Lundberg model; insurance risk theory 
\end{abstract}


\section{Introduction}
The study of ruin probabilities for insurance risk models is a classical topic in applied probability, see e.g.\ \cite{rolski-SPIF}. Explicit formulas for ruin probabilities are available only in specific situations. One such instance is the classical Cram\'{e}r\-/Lundberg risk model when claim sizes are of phase\-/type, see e.g.\ \cite{asmussen-RP} for more details. However, the tail of such phase\-/type distributions is exponentially bounded \cite{neuts-MGSSM}, whereas insurance data often suggest heavy tails \cite{albrecher-RASA}. In the presence of heavy tails one then typically has to resort to approximations or simulations, and to achieve accuracy for either of the two can be challenging. While highly efficient simulation techniques for ruin probabilities for exponentially bounded claims are available for a long time already (e.g.\ using Lundberg conjugation \cite[Ch.XV]{asmussen-RP}), the field of efficient simulation for heavy tails has only advanced significantly in more recent years and is an active field of research (cf.\ \cite{asmussen2015error,ghamami2012improving,juneja2007estimating,nguyen2014new} and \cite{asmussen-SS} for an overview).

Among the many possible modelling approaches for insurance claim sizes, in this paper we will be interested in mixture models, where with a certain probability \perturbationparameter a new claim is of a heavy\-/tailed type and with probability $1-\perturbationparameter$ it is of a certain light\-/tailed type. Such a co\-/existence of heavy and light tails is very intuitive in practice, see e.g.\ \cite{lee2012modeling,tzougas2014optimal}. For small \perturbationparameter, \cite{vatamidou2013correctedrisk} used a perturbation approach to devise a numerical approximation scheme for the determination of ruin probabilities in the presence of heavy tails in the spirit of corrected phase\-/type approximations. Their approach relied on an alternative representation of the Pollaczek\-/Khinchine (PK) formula that converges more quickly as $\perturbationparameter \to 0$, see also \cite{geiger}. Inspired by this approach, in this paper we want to study the potential of such an alternative representation for general mixture models and not necessarily small \perturbationparameter. The focus here will be to see whether large claim approximations can be used more efficiently as control variates in a simulation procedure than for algorithms based on the classical PK formula. We will show both theoretically and in a numerical implementation that this is indeed the case. The results in principle apply to any situation where claim sizes are a mixture between a tractable light\-/tailed and a heavy\-/tailed distribution for which the convolution of the two can be calculated explicitly. Moreover, even if the latter convolution can not be evaluated explicitly, the series representation can be advantageous.

We will also study the performance of the alternative series representation for a conditional Monte Carlo method developed by Asmussen \& Kroese \cite{asmussen2006improved}. The latter can be applied to the PK formula and leads to a significant reduction of variance for the ruin probability estimator, but at a considerable  additional computational cost. It will turn out that for this case, our series representation has no significant advantage over the classical PK approach, but the performance is not worse either.

The rest of the paper is organised as follows. \Cref{Section: Model description} describes the risk model based on the mixture of light- and heavy\-/tailed claims and provides some preliminaries. In \Cref{Section: Control variate}, we then construct a new control variate estimator for the ruin probability based on subexponential properties, which can exploit the advantage of exact ruin probability formulas for the light\-/tailed component in the mixture. We provide error bounds, investigate the tail behaviour, and quantify the resulting variance reduction when using the control variates, as well as the advantage of our approach to the analogous one based on the PK formula. We also consider the introduction of this alternative series representation for a conditional Monte Carlo framework in the spirit of \cite{asmussen2006improved}. In \Cref{Section: Numerical results}, we then perform numerical experiments and analyse the results. Finally, we conclude in \Cref{Section: Conclusions}.

\section{Model description and preliminaries}\label{Section: Model description}
We start with a short description of phase\-/type and subexponential distributions in \Cref{Section: Phase-type distributions,secsub}, as they are building blocks for the risk model of this paper, which is introduced in Section \ref{sec:model}.

\subsection{Phase-type distributions}\label{Section: Phase-type distributions}
Consider a state space $E=\{1,2,\dots, p,p+1\}$ and a Markov jump process $\{X_t\}_{t\ge 0}$ evolving on $E$. Assume that the first $p$ states are transient and the last remaining state $p+1$ is absorbing. The intensity matrix of this process is given by
\begin{equation*}
 \bm{\Lambda} 
=
\begin{pmatrix}
\bm{T} & \bm{t} \\
\bm{0} & 0 
\end{pmatrix},
\end{equation*}
where $\bm{T}$ is a $p\times p$\-/dimensional sub\-/intensity matrix, and it consists of the jump rates between the transient states. The initial distribution of $\{X_t\}_{t\ge 0}$ on the transient states $1,\dots,p$ is defined by the vector $\bm{\pi}=(\pi_1,\dots,\pi_p)$ with $\pi_k={\mathbb P}(X_0=k)$ for $k=1,\dots,p$. Let $\bm{e}$ be a $p$-dimensional column vector of $1$'s and $\bm{t}=-\bm{T}\bm{e}$.  
A phase\-/type distribution is then defined as the absorption time $\tau$ of $X_t$, that is, 
$\tau:=\inf\{t>0|X_t=p+1\}$ 
follows a phase\-/type distribution with parameters $\bm{\pi},\bm{T}$.

Phase\-/type distributions are natural and tractable extensions of the exponential distribution (which is retrieved for $p=1$), in the sense that their density and distribution functions are explicitly given for $x>0$ by the formulae
\begin{equation*}
f(x)=\bm{\pi} \exp(\bm{T}x)\bm{t} \qquad \text{and} \qquad
F(x)=1-\bm{\pi}\exp(\bm{T}x)\bm{e},
\end{equation*}
where the exponential of a matrix $M$ is defined as 
$$\exp(\bm{M})=\sum_{n=0}^\infty \frac{\bm{M}^n}{n!}. $$
The class of phase\-/type distributions has various attractive properties (it is e.g.\ closed under mixing, convolutions, exceedances, ordering etc.) and for phase\-/type distributed insurance claims there exist explicit formulas for ruin probabilities in a number of models (see \cite[Ch.IX]{asmussen-RP} for details). In addition, the class is dense (in the sense of weak convergence) among all distributions on the positive real line, so that in principle one may approximate any distribution arbitrarily well with a phase\-/type distribution. However, by construction phase\-/type distributions have an exponentially bounded tail, which is often too restrictive in applications.

\subsection{Subexponential distributions}\label{secsub}
In many situations, distributions with a tail heavier than exponential are a better description of the data. Among these, an important subclass is the one of subexponential distributions \subexponentials, i.e.\ for any $n \in \naturals$, 
\begin{equation}\label{subexp}
    \overline{\examplecdfsymbol^{*n}}(\initialcapital) \sim n \exampleccdf,\quad \text{as}\;u\to\infty,
\end{equation}
where $\exampleccdf=1-\examplecdf$ is the tail of the underlying distribution function $F$, see e.g.\ \cite{teugels1975class}. This mathematical definition is built around the intuition that the tail behavior of sums of independent such random variables is determined by the largest among them. The class of subexponential distributions comprises most heavy-tailed distributions of practical interest (including the Lognormal, Pareto, and heavy-tailed Weibull distribution). However, for exact calculations this class is not amenable and one typically has to resort to simulation in order to determine quantities like ruin probabilities with subexponential claims, and the latter is known to be challenging due to the rare event character (cf.\  \cite[Ch.XV]{asmussen-RP}). In the sequel, we will need the following well\-/known asymptotic property of subexponential distributions (see e.g.\ \cite[Cor.3.18]{foss-IHSD} or \cite[Cor.X.1.11]{asmussen-RP}):

\begin{property}\label{Property: Convolution of a subexponential distribution with a distribution of a lighter tail}
  Let $\examplecdfsymbol \in \subexponentials$ and let $A$ be any distribution with a lighter tail, i.e.\ $\overline{A}(\initialcapital) = o\left(\exampleccdf\right)$. Then for the convolution $A * \examplecdfsymbol$ of $A$ and \examplecdfsymbol we have $A * \examplecdfsymbol \in \subexponentials$ and $\overline{\left(A * \examplecdfsymbol\right)}(\initialcapital) \sim \exampleccdf$.
\end{property}

\subsection{The model}\label{sec:model}

Consider the classical Cram\'{e}r\-/Lundberg risk model for the surplus process of an insurance portfolio. The premium inflow is assumed at a constant rate (w.l.o.g.\ 1 per unit time) and claims arrive according to a homogeneous Poisson process $\{\poissonprocessmix\}_{t \geq 0}$ with rate \arrivalrate. The claim sizes $\claimsizemix \equalindistribution \claimsizemixrv$ are i.i.d.\ with common distribution function \claimsizedistributionmixsymbol, and are independent of $\{\poissonprocessmix\}$. If \initialcapital is the initial capital, the surplus at time $t$ is then given by
\begin{equation*}
  \riskreservemix = \initialcapital +t-\sum_{k=1}^{\poissonprocessmix}\claimsizemix.
\end{equation*}

We also define the claim surplus process $\claimsurplusmix =\initialcapital - \riskreservemix$ and its maximum $\supremumsurplusmix = \sup_{0 \leq t < \infty} \claimsurplusmix$. The probability \ruinmix of ultimate ruin is then
\begin{equation}\label{Equation: Definition of the mix ruin probability}
  \ruinmix = \pr(\supremumsurplusmix > \initialcapital).
\end{equation}
In addition, we assume that the safety loading condition $\claimratemix = \arrivalrate \meanclaimsizemix <1$ holds and thus the well\-/known Pollaczek\-/Khinchine (PK) formula 
\begin{equation}\label{Equation: Pollaczek-Khinchine formula mixture model}
  1 - \ruinmix = (1-\claimratemix)\sum_{k=0}^\infty \claimratemix^k \excessclaimsizedistributionmixconvolution{k}
\end{equation}
can be used for the evaluation of the ruin probability. 
Here $\excessclaimsizedistributionmix = \int_{0}^\initialcapital \big( 1 - \claimsizedistributionmix \big) dx / \e \claimsizemixrv$ is the distribution function of the stationary excess claim size  \excessclaimsizemixrv, see e.g.\ \cite{asmussen-RP}.

In this paper, we assume that claim sizes are of a mixture type. Concretely, \claimsizemixrv is phase\-/type with probability $1-\perturbationparameter$ and heavy\-/tailed (subexponential) with probability \perturbationparameter, where $\perturbationparameter \in (0,1)$. The phase\-/type claim sizes $\phclaimsize \equalindistribution \phclaimsizerv$ and the subexponential claim sizes $\htclaimsize \equalindistribution \htclaimsizerv$ are both assumed to have finite means \meanphclaimsize and \meanhtclaimsize, respectively. 
Denote by \ltexcessclaimsizedistributionmix, \ltexcessphclaimsizedistribution, and \ltexcesshtclaimsizedistribution the Laplace transforms of the stationary excess claim sizes $\excessclaimsizemix \equalindistribution \excessclaimsizemixrv$, $\excessphclaimsize \equalindistribution \excessphclaimsizerv$, and $\excesshtclaimsize \equalindistribution \excesshtclaimsizerv$, respectively. Moreover, we set $\phclaimrate:=\arrivalrate \meanphclaimsize$ and $\htclaimrate := \arrivalrate \meanhtclaimsize$, which means that the phase\-/type and heavy\-/tailed claims are responsible for expected aggregate claim size $(1-\perturbationparameter)\phclaimrate$ and $\perturbationparameter\htclaimrate$ per unit time, respectively. The expected overall aggregate claim size is then given by $\claimratemix = (1-\perturbationparameter)\phclaimrate + \perturbationparameter\htclaimrate$. In terms of Laplace transforms, the Pollaczek\-/Khinchine formula can be written as
\begin{align}\label{Equation: Laplace transform of the Polllaczek-Khinchine formula mixture model}
    \e e^{-s \supremumsurplusmix} \
                = (1-\claimratemix)\sum_{k=0}^\infty\claimratemix^k \big( \ltexcessclaimsizedistributionmix \big)^k
                = \frac{1-\claimratemix}{1 - \claimratemix\;  \ltexcessclaimsizedistributionmix}
                = \frac{1-(1-\perturbationparameter)\phclaimrate-\perturbationparameter\htclaimrate}{1-(1-\perturbationparameter)\phclaimrate \ltexcessphclaimsizedistribution - \perturbationparameter\htclaimrate \ltexcesshtclaimsizedistribution}.
\end{align}

Using representation \eqref{Equation: Laplace transform of the Polllaczek-Khinchine formula mixture model}, it was shown in \cite{vatamidou2013correctedrisk} that \ruinmix can be expressed as a series expansion involving the ruin probability of a risk process with purely phase\-/type claim sizes (base model). One easy way to establish a phase\-/type base model is by simply considering that $\claimsizedistributionmix = (1-~\perturbationparameter)\phclaimsizedistribution + \perturbationparameter$, $x\geq 0$, i.e.\ discard all heavy\-/tailed claim sizes. This base model, for which the claim size distribution has an atom at zero, is equivalent to the compound Poisson risk model in which claims arrive at rate $(1-\perturbationparameter)\arrivalrate$ and follow the distribution of \phclaimsizerv. We denote by \supremumsurplusdiscardsymbol the supremum of its corresponding claim surplus process and we set $\discardclaimrate = (1-\perturbationparameter) \phclaimrate$. The PK formula for this base model takes the form
\begin{equation}\label{Equation: Laplace transform of the Polllaczek-Khinchine formula discard model}
  \e e^{-s \supremumsurplusdiscardsymbol} = \frac{1-\discardclaimrate}{1-\discardclaimrate \ltexcessphclaimsizedistribution}.
\end{equation}

We denote by \ruindiscard the phase\-/type approximation of \ruinmix that is obtained when we apply Laplace inversion to \eqref{Equation: Laplace transform of the Polllaczek-Khinchine formula discard model}. The following series expansion of \ruinmix for the general risk process was shown in \cite[Th.1]{vatamidou2013correctedrisk}. In order to keep this paper self\-/contained, we repeat the short proof here in the present notation. 

\begin{theorem}[\cite{vatamidou2013correctedrisk}]\label{Theorem: Discard expansion of the ruin probability}
  We have 
  \begin{align}\label{form1}
    \ruinmix   =   \frac{1-\claimratemix}{1-\discardclaimrate}\,\ruindiscard +\frac{1-\claimratemix}{1-\discardclaimrate}\, \sum_{k=1}^\infty \left(\frac{\perturbationparameter\htclaimrate}{1-\discardclaimrate}\right)^k \convolutiondiscard{k} ,
  \end{align}
  where $\convolutiondiscard{k} = \pr(\supremumsurplusdiscard[0]+\supremumsurplusdiscard[1]+\dots+\supremumsurplusdiscard+\excesshtclaimsize[1]+\dots+\excesshtclaimsize > \initialcapital)$ and $\supremumsurplusdiscard \equalindistribution \supremumsurplusdiscardsymbol$. This expansion converges for all values of \initialcapital.
\end{theorem}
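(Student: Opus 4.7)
The plan is to derive the expansion directly from the explicit Laplace\-/Stieltjes transform in \eqref{Equation: Laplace transform of the Polllaczek-Khinchine formula mixture model} by exploiting the mixture identity $\claimratemix\ltexcessclaimsizedistributionmix = \discardclaimrate\ltexcessphclaimsizedistribution + \perturbationparameter\htclaimrate\ltexcesshtclaimsizedistribution$ already visible in its last form. The first step is to factor the heavy\-/tailed contribution out of the denominator, writing
\begin{equation*}
  \e e^{-s\supremumsurplusmix} \;=\; \frac{1-\claimratemix}{1-\discardclaimrate\,\ltexcessphclaimsizedistribution}\cdot\frac{1}{1 - \dfrac{\perturbationparameter\htclaimrate\,\ltexcesshtclaimsizedistribution}{1-\discardclaimrate\,\ltexcessphclaimsizedistribution}},
\end{equation*}
and substituting $1/(1-\discardclaimrate\,\ltexcessphclaimsizedistribution) = \e e^{-s\supremumsurplusdiscardsymbol}/(1-\discardclaimrate)$ from \eqref{Equation: Laplace transform of the Polllaczek-Khinchine formula discard model} so that the base\-/model supremum appears explicitly.

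The second step is a geometric series expansion of the second factor. For real $s\ge 0$ both $\ltexcessphclaimsizedistribution$ and $\ltexcesshtclaimsizedistribution$ take values in $[0,1]$, so the ratio in that factor is bounded uniformly by $\perturbationparameter\htclaimrate/(1-\discardclaimrate)$, which is strictly less than $1$ because the safety loading $\claimratemix = \discardclaimrate + \perturbationparameter\htclaimrate < 1$ rearranges to $\perturbationparameter\htclaimrate < 1-\discardclaimrate$. After expansion, the $k$\-/th summand reads
\begin{equation*}
  \frac{1-\claimratemix}{1-\discardclaimrate}\left(\frac{\perturbationparameter\htclaimrate}{1-\discardclaimrate}\right)^k\bigl(\e e^{-s\supremumsurplusdiscardsymbol}\bigr)^{k+1}\bigl(\ltexcesshtclaimsizedistribution\bigr)^k,
\end{equation*}
which by independence is the Laplace\-/Stieltjes transform of the random variable $\supremumsurplusdiscard[0]+\supremumsurplusdiscard[1]+\cdots+\supremumsurplusdiscard+\excesshtclaimsize[1]+\cdots+\excesshtclaimsize$.

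The third step is to upgrade this transform identity to an identity of laws. The coefficients are non\-/negative and telescope as
\begin{equation*}
  \frac{1-\claimratemix}{1-\discardclaimrate}\sum_{k=0}^\infty\left(\frac{\perturbationparameter\htclaimrate}{1-\discardclaimrate}\right)^k \;=\; \frac{1-\claimratemix}{1-\discardclaimrate-\perturbationparameter\htclaimrate} \;=\; 1,
\end{equation*}
so on the transform side we have a bona fide convex mixture of proper distributions. By uniqueness of Laplace transforms this mixture is the law of $\supremumsurplusmix$, and taking complements gives $\ruinmix$ as the same convex combination of tails. The $k=0$ tail is exactly $\ruindiscard$, and for $k\ge 1$ the tails are $\convolutiondiscard{k}$ by definition, which produces \eqref{form1}; convergence for every $\initialcapital\ge 0$ is automatic since each tail lies in $[0,1]$ and the weights sum to $1$. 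The step needing most care is the legitimacy of the geometric expansion, namely verifying that the ratio in the second factor is strictly below $1$ uniformly in $s$ on the relevant right half\-/plane; the probabilistic interpretation of the resulting coefficients then bypasses any delicate termwise\-/inversion argument that a purely analytic route would demand.
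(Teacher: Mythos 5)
Your proof is correct and takes essentially the same route as the paper's: both work on the transform side of \eqref{Equation: Laplace transform of the Polllaczek-Khinchine formula mixture model}, expose the base-model supremum via \eqref{Equation: Laplace transform of the Polllaczek-Khinchine formula discard model}, arrive at the same geometric series with ratio $\perturbationparameter\htclaimrate\ltexcesshtclaimsizedistribution/(1-\discardclaimrate\ltexcessphclaimsizedistribution)$, and invert using the stability bound $\perturbationparameter\htclaimrate<1-\discardclaimrate$. The only real difference is a welcome algebraic shortcut: you get the geometric form by directly factoring the denominator, whereas the paper expands $\big(\ltexcessclaimsizedistributionmix\big)^\ell$ with the binomial theorem, interchanges the order of a double sum, and resums a negative-binomial series -- your route reaches the identical intermediate expression without that detour, and your observation that the weights form a proper convex combination makes the termwise inversion slightly more transparent than the paper's brief appeal to $\big|\e e^{-s\supremumsurplusdiscardsymbol}\big|\le 1$.
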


\begin{proof}
It can easily be derived that $\excessclaimsizemixrv = \bernoullirv \excessphclaimsize + (1-\bernoullirv)\excesshtclaimsize$, where $\bernoullirv \sim Bernoulli \big( \discardclaimrate/(\discardclaimrate + \perturbationparameter \htclaimrate) \big)$. Therefore 
$\displaystyle
    \ltexcessclaimsizedistributionmix = \frac{\discardclaimrate}{\discardclaimrate + \perturbationparameter \htclaimrate}\ltexcessphclaimsizedistribution + \frac{\perturbationparameter\htclaimrate}{\discardclaimrate + \perturbationparameter \htclaimrate} \ltexcesshtclaimsizedistribution
$, and we find by virtue of the binomial identity
\begin{align*}
    \big( \ltexcessclaimsizedistributionmix \big)^\ell
        = \frac1{(\discardclaimrate + \perturbationparameter \htclaimrate)^\ell}
        \sum_{k=0}^\ell \binom{\ell}{k} (\discardclaimrate)^{\ell-k} \big( \ltexcessphclaimsizedistribution \big)^{\ell-k} (\perturbationparameter \htclaimrate)^k \big( \ltexcesshtclaimsizedistribution \big)^{k}.
\end{align*}
Combining \Cref{Equation: Laplace transform of the Polllaczek-Khinchine formula mixture model,Equation: Laplace transform of the Polllaczek-Khinchine formula discard model}, we get
    \begin{align*}
     \e e^{-s \supremumsurplusmix} =&\ 
      (1-\discardclaimrate - \perturbationparameter \htclaimrate)\sum_{\ell=0}^\infty\sum_{k=0}^\ell \binom{\ell}{k} (\discardclaimrate)^{\ell-k} \big( \ltexcessphclaimsizedistribution \big)^{\ell-k} (\perturbationparameter \htclaimrate)^k \big( \ltexcesshtclaimsizedistribution \big)^{k}\\
    =&\  (1-\discardclaimrate - \perturbationparameter \htclaimrate)\sum_{k=0}^\infty (\perturbationparameter \htclaimrate)^k \big( \ltexcesshtclaimsizedistribution \big)^{k} \sum_{\ell=k}^\infty \binom{\ell}{k} (\discardclaimrate)^{\ell-k} \big( \ltexcessphclaimsizedistribution \big)^{\ell-k} \\
    =&\  (1-\discardclaimrate - \perturbationparameter \htclaimrate)\sum_{k=0}^\infty (\perturbationparameter \htclaimrate)^k \big( \ltexcesshtclaimsizedistribution \big)^{k} \frac1{\big( 1- \discardclaimrate \ltexcessphclaimsizedistribution \big)^{k+1}} \\
 =&\  (1-\discardclaimrate - \perturbationparameter \htclaimrate)\sum_{k=0}^\infty (\perturbationparameter \htclaimrate)^k \big( \ltexcesshtclaimsizedistribution \big)^{k} \frac1{( 1- \discardclaimrate )^{k+1}} \Big( \e e^{-s \supremumsurplusdiscardsymbol} \Big)^{k+1} \\
    =& \frac{1-\claimratemix}{1-\discardclaimrate}\, \sum_{k=0}^\infty \left(\frac{\perturbationparameter\htclaimrate}{1-\discardclaimrate}\right)^k \big( \ltexcesshtclaimsizedistribution \big)^{k} \Big( \e e^{-s \supremumsurplusdiscardsymbol} \Big)^{k+1} .
    \end{align*}
    We obtain the provided series expansion for \ruinmix via Laplace inversion and using $\ruindiscard= \pr(\supremumsurplusdiscard[0] > \initialcapital)$. The convergence is granted by $\displaystyle \abs[{\e e^{-s \supremumsurplusdiscardsymbol}}] \leq 1$ and $\displaystyle \abs[ \ltexcesshtclaimsizedistribution] \leq 1$, while $\perturbationparameter\htclaimrate < 1-\discardclaimrate$ due to the stability condition $\claimratemix <1$.
\end{proof}

\Cref{Theorem: Discard expansion of the ruin probability} provides an alternative interpretation for \supremumsurplusmix, i.e.\ $\supremumsurplusmix \equalindistribution \sum_{k=0}^\geometric (\supremumsurplusdiscard + \excesshtclaimsize)$, where $\excesshtclaimsize[0]:=0$ and \geometric is a geometric random variable $\geometric \sim Geom\left( \frac{1-\claimratemix}{1-\discardclaimrate}\right)$. In general, the term corresponding to $k=0$ is explicit. Note that for various subexponential distributions associated with \excesshtclaimsize, the term corresponding to $k=1$ in \eqref{form1} can be also calculated explicitly, so that
\begin{align*}
    \ruinmix   =&  \underbrace{\frac{1-\claimratemix}{1-\discardclaimrate} \, \ruindiscard
    + \frac{1-\claimratemix}{1-\discardclaimrate} \, \frac{\perturbationparameter\htclaimrate}{1-\discardclaimrate} \, \pr(\supremumsurplusdiscard[0]+\supremumsurplusdiscard[1]+\excesshtclaimsize[1] > \initialcapital)}_{{explicit}} \\
    &+\frac{1-\claimratemix}{1-\discardclaimrate} \, \sum_{k=2}^\infty \left(\frac{\perturbationparameter\htclaimrate}{1-\discardclaimrate}\right)^k \convolutiondiscard{k}.
  \end{align*}
Thus, to approximate \ruinmix, we only need to have an estimate for 
  \begin{align}
    \remainingterms :&=   \frac{1-\claimratemix}{1-\discardclaimrate}\, \sum_{k=2}^\infty \left(\frac{\perturbationparameter\htclaimrate}{1-\discardclaimrate}\right)^k \convolutiondiscard{k}
    = \left(\frac{\perturbationparameter\htclaimrate}{1-\discardclaimrate}\right)^2 \e \convolutiondiscard{\geometric+2} \notag \\
    &= \left(\frac{\perturbationparameter\htclaimrate}{1-\discardclaimrate}\right)^2 \pr(\supremumsurplusdiscard[0]+\supremumsurplusdiscard[1]+\dots+\supremumsurplusdiscard[\geometric+2]+\excesshtclaimsize[1]+\dots+\excesshtclaimsize[\geometric+2] > \initialcapital) \label{Equation: New target probability},
  \end{align}
which we want to approximate by simulating the tail of
\begin{equation}\label{Equation: Target random variable}
    \newsupremum \equalindistribution \supremumsurplusdiscard[0]+\supremumsurplusdiscard[1]+\excesshtclaimsize[1]  +\sum_{k=2}^{\geometric+2} (\supremumsurplusdiscard + \excesshtclaimsize),
\end{equation}
with $\geometric \sim Geom\left( \frac{1 -\claimratemix}{1-\discardclaimrate}\right)$.

Using the above representation, we propose in \Cref{Section: Control variate} efficient variance reduction techniques for this simulation based on suitably chosen control variates.

\section{Control variate techniques}\label{Section: Control variate}
Let \unknownrv be the random variable we must simulate in order to calculate its expectation $ \remainingterms=\e \unknownrv$.
The idea of a control variate is to use another random variable \knownrv, which has a known expectation $\e \knownrv$ and is strongly correlated with \unknownrv. Thus, the deviation of the simulated from the exact value of \knownrv may be used for improving the simulation accuracy for \unknownrv. If $\big( \unknown, \known \big)$, $i=1,2,\dots,\simnumber$, are independent copies of $\big( \unknownrv, \knownrv \big)$, then an efficient control variate estimator is defined as
\begin{equation}\label{Equation: General estimator}
    \estimremainingterms{\simnumber}:= \empiricalmeanunknown + \empiricalcovariance \big( \empiricalmeanknown -  \e \knownrv \big),
\end{equation}
where
\begin{equation}\label{Equation: General expectations for the estimator}
    \empiricalmeanunknown = \frac{\sum_{i=1}^\simnumber \unknown}{\simnumber},\  
    \empiricalmeanknown = \frac{\sum_{i=1}^\simnumber \known}{\simnumber}, \  
    \empiricalcovariance = -\frac{\sum_{i=1}^\simnumber \big( \unknown - \empiricalmeanunknown \big) \big( \known - \empiricalmeanknown \big)}{\sum_{i=1}^\simnumber  \big( \known - \empiricalmeanknown \big)^2}.
\end{equation}
Note that this choice of \empiricalcovariance based on the empirical correlation of \unknownrv and \knownrv optimizes the variance gain, see e.g.\ \cite{albrecher-RASA,asmussen-SS}. 
We assume now that the distribution of \excesshtclaimsizerv belongs to the class of subexponential distributions {satisfying} \eqref{subexp}. The construction of the concrete \knownrv below is inspired by \Cref{Property: Convolution of a subexponential distribution with a distribution of a lighter tail} given in \Cref{secsub}. That is, for sufficiently large \initialcapital, only the maximum of the subexponential claims will substantially contribute to the probability in \eqref{Equation: New target probability}. 

\subsection{Max of heavy tails}
\label{Section: Approach 1 -- Max of heavy tails}
It is immediately obvious from \Cref{Equation: New target probability} that we may take
\begin{equation}\label{Equation: Definition for the unknonw random variable for the control variate}
    \unknownrv =   \left(\frac{\perturbationparameter\htclaimrate}{1-\discardclaimrate} \right)^2 \indicatorfunction[\{\newsupremum > \initialcapital \}],
\end{equation}
and this variable will have the desired mean $ \e \unknownrv=\remainingterms$. We also define, for fixed $n \in \naturals$, the random variable
\begin{equation}
    \newsupremumtruncate :=   \max \{ \excesshtclaimsize[1],\dots,\excesshtclaimsize[\geometric+2] \}   \indicatorfunction[\{\geometric +2\leq n  \}],
\end{equation}
which will serve as a component of the control variate of \unknownrv.

\begin{definition}\label{Definition: Corrected discard approximation}
For a fixed $n \in \naturals$, define the control variate
\begin{align}
    \truncknownrv =&  \left(\frac{\perturbationparameter\htclaimrate}{1-\discardclaimrate} \right)^2 \indicatorfunction[\{\newsupremumtruncate > \initialcapital \}].
     \label{Equation: Definition of the control variate}
\intertext{The $n$th order approximation $\approxremainingterms{n} = \e \truncknownrv $ of \remainingterms is then}
    \approxremainingterms{n} =&  \left( \frac{1-\claimratemix}{1-\discardclaimrate}\right) \sum_{k=2}^n \left(\frac{\perturbationparameter\htclaimrate}{1-\discardclaimrate}\right)^k \pr \big( \max \{ \excesshtclaimsize[1],\dots,\excesshtclaimsize[k] \} > \initialcapital \big).\label{Equation: Definition of the nth order approximation of the ruin probability}
\end{align}
\end{definition}

By construction, \approxremainingterms{n} underestimates \remainingterms. 
Next we collect some properties of this approximation.

\subsubsection{Properties of the approximation}\label{Section: Properties of the approximation}
The following lower and upper bounds for the  approximation error can be obtained.

\begin{proposition}[Error bounds]\label{Theorem: Error bounds for the corrected discard approximation}
  The error of the approximation \approxremainingterms{n}, $n \in \naturals$, is bounded from above and below as follows:
  \begin{align*}
    \left(\frac{\perturbationparameter\htclaimrate}{1-\discardclaimrate}\right)^{n+1}  \convolutiondiscard{1} \quad
    \leq  \remainingterms - \approxremainingterms{n} \quad
    \leq & \left(\frac{\perturbationparameter\htclaimrate}{1-\discardclaimrate}\right)^{n+1} \\
    &+ \left( 1-\frac{\perturbationparameter\htclaimrate}{1-\discardclaimrate}\right) \left(\frac{\perturbationparameter\htclaimrate}{1-\discardclaimrate} \excesshtclaimsizedistribution \right)^2 
    \frac{1- \left(\frac{\perturbationparameter\htclaimrate}{1-\discardclaimrate} \excesshtclaimsizedistribution \right)^{n-1}}{1- \frac{\perturbationparameter\htclaimrate}{1-\discardclaimrate}  \excesshtclaimsizedistribution}.
  \end{align*}
\end{proposition}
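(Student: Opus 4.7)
The plan is to rewrite $\remainingterms - \approxremainingterms{n}$ as a single probability and split the underlying event on the value of $\geometric$. First I note that $\newsupremumtruncate \leq \newsupremum$ almost surely: the former equals at most $\max\{\excesshtclaimsize[1],\dots,\excesshtclaimsize[\geometric+2]\}$, which is dominated by the full sum in \eqref{Equation: Target random variable} since every $\supremumsurplusdiscard[k]$ is nonnegative. Consequently
\begin{equation*}
    \remainingterms - \approxremainingterms{n} \;=\; \left(\frac{\perturbationparameter\htclaimrate}{1-\discardclaimrate}\right)^{2}\pr(\newsupremum > \initialcapital,\, \newsupremumtruncate \leq \initialcapital).
\end{equation*}
Throughout I exploit the monotonicity of $\convolutiondiscard{k}$ in $k$ (clear from nonnegativity of all summands) together with the geometric tail $\pr(\geometric \geq m) = (\perturbationparameter\htclaimrate/(1-\discardclaimrate))^{m}$.

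For the lower bound I restrict to $\{\geometric + 2 > n\}$, where by definition $\newsupremumtruncate = 0 \leq \initialcapital$. Conditioning on $\geometric = k$ gives $\pr(\newsupremum > \initialcapital \mid \geometric = k) = \convolutiondiscard{k+2} \geq \convolutiondiscard{1}$, so the joint probability is at least
\begin{equation*}
    \sum_{k \geq n-1} \pr(\geometric = k)\, \convolutiondiscard{k+2} \;\geq\; \convolutiondiscard{1}\, \pr(\geometric \geq n-1) \;=\; \convolutiondiscard{1}\left(\frac{\perturbationparameter\htclaimrate}{1-\discardclaimrate}\right)^{n-1}.
\end{equation*}
Multiplying by the prefactor produces the claimed lower bound.

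For the upper bound I decompose the event along $\{\geometric+2 > n\}$ versus $\{\geometric+2 \leq n\}$. The first piece is bounded crudely by $\pr(\geometric \geq n-1)$, which after multiplication by the prefactor delivers $(\perturbationparameter\htclaimrate/(1-\discardclaimrate))^{n+1}$. On $\{\geometric+2 \leq n\}$ the condition $\newsupremumtruncate \leq \initialcapital$ becomes $\max\{\excesshtclaimsize[1],\dots,\excesshtclaimsize[\geometric+2]\} \leq \initialcapital$; dropping $\{\newsupremum > \initialcapital\}$ and using independence of the $\excesshtclaimsize[i]$'s from $\geometric$, this contributes at most $\sum_{j=2}^{n}\pr(\geometric + 2 = j)\, \excesshtclaimsizedistribution[\initialcapital]^{j}$, a finite geometric series in $(\perturbationparameter\htclaimrate/(1-\discardclaimrate))\excesshtclaimsizedistribution[\initialcapital]$. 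Summing it in closed form and using the identity $(1-\claimratemix)/(1-\discardclaimrate) = 1 - \perturbationparameter\htclaimrate/(1-\discardclaimrate)$ (from $\claimratemix = \discardclaimrate + \perturbationparameter\htclaimrate$) recovers the second summand of the stated upper bound exactly.

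The main obstacle is purely algebraic bookkeeping: keeping the index shift $\geometric \mapsto \geometric + 2$ aligned with the subscript of $\convolutiondiscard{\cdot}$, and repackaging the geometric-series prefactor so that the factor $1-\perturbationparameter\htclaimrate/(1-\discardclaimrate)$ appears in the final form. No subexponential input is needed at this stage; only the geometric weighting of $\geometric$ and the monotonicity of $\convolutiondiscard{k}$ enter.
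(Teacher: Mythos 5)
Your proof is correct and follows essentially the same route as the paper's: both decompose by whether the truncation index $\geometric + 2$ exceeds $n$, use $\convolutiondiscard{k} \geq \convolutiondiscard{1}$ for the lower bound, and use the crude bound (dropping the $\{\newsupremum > u\}$ constraint, i.e.\ $\convolutiondiscard{k}\le 1$) together with the finite geometric series for the upper bound. The only difference is cosmetic: you first package the error as the single joint probability $p^2\,\pr(\newsupremum > u,\ \newsupremumtruncate \leq u)$ and split the event, whereas the paper manipulates the series $\remainingterms - \approxremainingterms{n}$ term by term, but the resulting inequalities are identical.
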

\begin{proof}
  For simplicity of notation, we set $p:= \frac{\perturbationparameter\htclaimrate}{1-\discardclaimrate}$. The error of the approximation is equal to
  \begin{align*}
      \remainingterms - \approxremainingterms{n}
      =&\ ( 1-p ) \sum_{k=2}^\infty p^n \convolutiondiscard{k}
      - (1 - p) \sum_{k=2}^n p^k \pr \big( \max \{ \excesshtclaimsize[1],\dots,\excesshtclaimsize[k] \} > \initialcapital \big)  \\
      =&\ ( 1-p ) \sum_{k=2}^n p^k \Big( \convolutiondiscard{k} -\pr \big( \max \{ \excesshtclaimsize[1],\dots,\excesshtclaimsize[k] \} > \initialcapital \big) \Big)
      + (1 - p) \sum_{k=n+1}^{\infty} p^k \convolutiondiscard{k}.
    \intertext{For the upper bound, we use $\pr \big( \max \{ \excesshtclaimsize[1],\dots,\excesshtclaimsize[k] \} > \initialcapital \big)
    = 1- \big( \excesshtclaimsizedistribution \big)^k$ and $\convolutiondiscard{k} \leq 1$ to obtain}
    \remainingterms - \approxremainingterms{n}
    \leq & ( 1-p ) \sum_{k=2}^n p^k \pr \big( \max \{ \excesshtclaimsize[1],\dots,\excesshtclaimsize[k] \} \leq \initialcapital \big) + (1 - p) \sum_{k=n+1}^{\infty} p^k
    = p^{n+1} + ( 1-p ) \sum_{k=2}^n \big( p \excesshtclaimsizedistribution \big)^k.
    \intertext{For the lower bound, we take $\convolutiondiscard{k} \geq \pr \big( \max \{ \excesshtclaimsize[1],\dots,\excesshtclaimsize[k] \} > \initialcapital \big) $ when $k \leq n$ and $\convolutiondiscard{k} \geq \convolutiondiscard{1}$ otherwise, to calculate}
    \remainingterms - \approxremainingterms{n}
    \geq & \;(1 - p) \sum_{k=n+1}^{\infty} p^k \convolutiondiscard{1} = p^{n+1} \convolutiondiscard{1},
  \end{align*}
  and the proof is complete.
\end{proof}

\begin{proposition}[Tail behaviour]\label{Theorem: Tail behaviour of the corrected discard approximation}
  For $\excesshtclaimsizerv \in \subexponentials$, the $n$th approximation 
  \begin{align*}
      \approxruinmix{n} &:= \frac{1-\claimratemix}{1-\discardclaimrate} \, \ruindiscard
    + \frac{1-\claimratemix}{1-\discardclaimrate} \,\frac{\perturbationparameter\htclaimrate}{1-\discardclaimrate} \, \pr(\supremumsurplusdiscard[0]+\supremumsurplusdiscard[1]+\excesshtclaimsize[1] > \initialcapital) + \approxremainingterms{n}
     \intertext{of the target ruin probability $\psi(u)$ has the following tail behaviour:}
    \approxruinmix{n} &\sim \frac{\perturbationparameter\htclaimrate}{1-\claimratemix} \left(1 - (n+1) \left( \frac{\perturbationparameter\htclaimrate}{1-\discardclaimrate} \right)^n + n \left( \frac{\perturbationparameter\htclaimrate}{1-\discardclaimrate} \right)^{n+1} \right) \excesshtclaimsizedistributioncomplement,\quad u\to\infty. \end{align*}
\end{proposition}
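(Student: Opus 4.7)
Set $p := \perturbationparameter\htclaimrate/(1-\discardclaimrate)$, so that $1-p = (1-\claimratemix)/(1-\discardclaimrate)$. Since $\approxruinmix{n}$ is a finite sum of three pieces, my plan is to treat each one separately and then add the resulting leading-order constants in front of \excesshtclaimsizedistributioncomplement.

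First I would dispose of $T_1 := \frac{1-\claimratemix}{1-\discardclaimrate}\,\ruindiscard$. This is proportional to the ruin probability in a Cram\'er-Lundberg model with phase-type claim sizes \phclaimsizerv, so its tail is exponentially bounded. Because every subexponential distribution is long-tailed, and hence satisfies $\excesshtclaimsizedistributioncomplement e^{\gamma \initialcapital} \to \infty$ for every $\gamma > 0$, we get $T_1 = o(\excesshtclaimsizedistributioncomplement)$ and this term does not contribute to the leading-order behaviour.

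Next, for the second piece $T_2 := \frac{1-\claimratemix}{1-\discardclaimrate}\, p\, \pr(\supremumsurplusdiscard[0]+\supremumsurplusdiscard[1]+\excesshtclaimsize[1] > \initialcapital)$, I would invoke \Cref{Property: Convolution of a subexponential distribution with a distribution of a lighter tail} directly: the sum $\supremumsurplusdiscard[0]+\supremumsurplusdiscard[1]$ has an exponentially bounded tail (the $\supremumsurplusdiscardsymbol$ distribution is phase-type on $(0,\infty)$ with an atom at $0$), hence lighter than the subexponential $F^e_h$, so the convolution tail is $\sim \excesshtclaimsizedistributioncomplement$. This yields the contribution $\frac{(1-\claimratemix)p}{1-\discardclaimrate}\,\excesshtclaimsizedistributioncomplement$. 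For the third piece $\approxremainingterms{n}$ I would use
\[
\pr\bigl(\max\{\excesshtclaimsize[1],\dots,\excesshtclaimsize[k]\}>\initialcapital\bigr) = 1-(\excesshtclaimsizedistribution)^k \sim k\,\excesshtclaimsizedistributioncomplement,
\]
which follows from the binomial expansion of $(1-\excesshtclaimsizedistributioncomplement)^k$. Summing over $k=2,\dots,n$ and combining with the $k=1$ contribution from $T_2$ produces the overall prefactor $\frac{1-\claimratemix}{1-\discardclaimrate}\sum_{k=1}^n k p^k$ in front of \excesshtclaimsizedistributioncomplement.

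The remaining step is purely algebraic: differentiating $\sum_{k=0}^n p^k = (1-p^{n+1})/(1-p)$ in $p$ gives
\[
\sum_{k=1}^n k p^k = \frac{p\bigl(1-(n+1)p^n+np^{n+1}\bigr)}{(1-p)^2},
\]
and substituting $(1-p)^2 = (1-\claimratemix)^2/(1-\discardclaimrate)^2$ collapses the prefactor $\frac{1-\claimratemix}{1-\discardclaimrate}\cdot \frac{p}{(1-p)^2}$ to $\perturbationparameter\htclaimrate/(1-\claimratemix)$, producing the claimed equivalence. The only slightly subtle point is the first step: one must invoke long-tailedness of subexponential distributions together with the exponential bound on the phase-type base-model ruin probability to dismiss $T_1$. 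Everything else is a finite sum of standard subexponential asymptotics together with an elementary geometric-sum derivative.
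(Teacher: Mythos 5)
Your proof is correct and follows the same decomposition as the paper: discard the exponentially bounded $\ruindiscard$ and $\supremumsurplusdiscard[0]+\supremumsurplusdiscard[1]$ terms, use \Cref{Property: Convolution of a subexponential distribution with a distribution of a lighter tail} for the single-heavy-claim convolution, extract $\sim k\,\excesshtclaimsizedistributioncomplement$ for the max terms, and finish with the geometric-derivative identity. The one place you diverge is in obtaining $\pr\bigl(\max\{\excesshtclaimsize[1],\dots,\excesshtclaimsize[k]\}>\initialcapital\bigr)\sim k\,\excesshtclaimsizedistributioncomplement$: you expand the exact expression $1-(1-\excesshtclaimsizedistributioncomplement)^k$, which is elementary and needs no subexponentiality, whereas the paper bounds the max by the i.i.d.\ sum and invokes \eqref{subexp}; your version is arguably the cleaner one, since the paper's inequality on its own only delivers the asymptotic upper bound, and the matching lower bound is supplied precisely by the exact formula you use.
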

\begin{proof}
  The approximation \ruindiscard has a phase\-/type representation; therefore, it is of order $\displaystyle o\left(\excesshtclaimsizedistributioncomplement\right)$. The same holds for the tail of the distribution of $\supremumsurplusdiscard[0]+\supremumsurplusdiscard[1]$. Moreover, since $\excesshtclaimsizerv \in \subexponentials$, from \Cref{Property: Convolution of a subexponential distribution with a distribution of a lighter tail} we obtain $\pr(\supremumsurplusdiscard[0]+\supremumsurplusdiscard[1]+\excesshtclaimsize[1]> \initialcapital) \sim \excesshtclaimsizedistributioncomplement$. Finally, from $ \pr \big( \max \{ \excesshtclaimsize[1],\dots,\excesshtclaimsize[n] \} > \initialcapital \big) \leq  \pr(\excesshtclaimsize[1]+\dots+\excesshtclaimsize[n] > \initialcapital)$ and \eqref{subexp}, we deduce that $ \pr \big( \max \{ \excesshtclaimsize[1],\dots,\excesshtclaimsize[n] \} > \initialcapital \big) \sim n \excesshtclaimsizedistributioncomplement$, which leads to the following result by inserting these asymptotic estimates into \Cref{Definition: Corrected discard approximation}:
  \begin{align*}
      \approxruinmix{n} \sim &\left( 1-\frac{\perturbationparameter\htclaimrate}{1-\discardclaimrate}\right) \sum_{k=1}^n k \left( \frac{\perturbationparameter\htclaimrate}{1-\discardclaimrate}\right)^k \excesshtclaimsizedistributioncomplement
      = \frac{\frac{\perturbationparameter\htclaimrate}{1-\discardclaimrate} \left(1 - (n+1) \left( \frac{\perturbationparameter\htclaimrate}{1-\discardclaimrate} \right)^n + n \left( \frac{\perturbationparameter\htclaimrate}{1-\discardclaimrate} \right)^{n+1} \right)}{1-\frac{\perturbationparameter\htclaimrate}{1-\discardclaimrate}} \excesshtclaimsizedistributioncomplement \\
      =& \frac{\perturbationparameter\htclaimrate}{1-\claimratemix} \left(1 - (n+1) \left( \frac{\perturbationparameter\htclaimrate}{1-\discardclaimrate} \right)^n + n \left( \frac{\perturbationparameter\htclaimrate}{1-\discardclaimrate} \right)^{n+1} \right) \excesshtclaimsizedistributioncomplement.
  \end{align*}
\end{proof}

\Cref{Theorem: Tail behaviour of the corrected discard approximation} (in comparison with Theorem~5 in \cite{vatamidou2013correctedrisk}) shows that \approxruinmix{n} nearly captures the asymptotic behaviour of the exact ruin probability
\begin{equation}\label{heavyt}\psi(u)\sim \frac{\perturbationparameter\htclaimrate}{1-\claimratemix} \,\excesshtclaimsizedistributioncomplement, \end{equation}
being off by a factor $\left(1 - (n+1) \left( \frac{\perturbationparameter\htclaimrate}{1-\discardclaimrate} \right)^n + n \left( \frac{\perturbationparameter\htclaimrate}{1-\discardclaimrate} \right)^{n+1} \right) \allowbreak \in (0,1)$. As expected, the tail of \approxruinmix{n} underestimates the tail of \ruinmix.



\subsubsection{Variance reduction}\label{Section: Variance reduction}
We consider now the bivariate simulation of i.i.d.\ copies of the random variables \newsupremum and \newsupremumtruncate:
\begin{equation}
    \big( \simnewsupremum, \simnewsupremumtruncate{n} \big), \quad i=1,2,\dots,\simnumber.
\end{equation}
For each fixed $n \in \naturals$, the estimator \eqref{Equation: General estimator} takes the form
\begin{equation}\label{est1}
    \estimapproxremainingterms{n}{\simnumber}:= \empiricalmeanunknown + \empiricalcovariance \big( \empiricalmeanknown -  \approxremainingterms{n} \big).
\end{equation}
We can now establish our main result.

\begin{theorem}[Variance reduction]\label{Theorem: Variance reduction}
  For each fixed $n \in \naturals$, the variance of the estimator \eqref{est1}
  behaves asymptotically as
  \begin{equation}
      \var \big( \estimapproxremainingterms{n}{\simnumber} \big) \sim \left( \frac{\perturbationparameter\htclaimrate}{1-\discardclaimrate} \right)^{n+3}
      \frac{1+  n \left( \frac{1-\claimratemix}{1-\discardclaimrate} \right)}{\frac{1-\claimratemix}{1-\discardclaimrate}} \cdot \frac{\excesshtclaimsizedistributioncomplement}{\simnumber}, \qquad \text{as } \initialcapital \rightarrow \infty
  \end{equation}
  and satisfies
  \begin{equation}\label{varred}
      \frac{\var \big( \estimapproxremainingterms{n}{\simnumber} \big)}{\var \big( \empiricalmeanunknown \big)} \rightarrow \left( \frac{\perturbationparameter\htclaimrate}{1-\discardclaimrate} \right)^{n-1}
      \frac{1+  n \left( \frac{1-\claimratemix}{1-\discardclaimrate} \right)}{1+ \frac{1-\claimratemix}{1-\discardclaimrate}  }, \qquad \text{as } \initialcapital \rightarrow \infty.
  \end{equation}
\end{theorem}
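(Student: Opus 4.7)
The plan is to reduce to the standard variance identity for the control\-/variate estimator with optimal coefficient. Because $\empiricalcovariance$ converges almost surely to the population optimum $\beta^\star := -\cov(\unknownrv,\truncknownrv)/\var(\truncknownrv)$ (and does not alter the leading order asymptotics by Slutsky's theorem), I would work with
\begin{equation*}
\simnumber \cdot \var\bigl(\estimapproxremainingterms{n}{\simnumber}\bigr) \;=\; \var(\unknownrv) \;-\; \frac{\cov(\unknownrv,\truncknownrv)^2}{\var(\truncknownrv)}.
\end{equation*}
With the shorthand $p := \perturbationparameter\htclaimrate/(1-\discardclaimrate)$ and $q := (1-\claimratemix)/(1-\discardclaimrate) = 1-p$, both $\unknownrv$ and $\truncknownrv$ are $p^2$\-/scaled indicators, so all three second moments reduce to expressions in $\pr(\newsupremum>\initialcapital)$ and $\pr(\newsupremumtruncate>\initialcapital)$.

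The structural ingredient I would exploit is the coupling inclusion $\{\newsupremumtruncate > \initialcapital\} \subseteq \{\newsupremum > \initialcapital\}$: if $\newsupremumtruncate$ exceeds $\initialcapital$, then $\geometric+2 \le n$ and some $\excesshtclaimsize[j] > \initialcapital$ with $j \le \geometric+2$, and this same summand appears in $\newsupremum$. Thanks to this inclusion, $\cov(\unknownrv,\truncknownrv) = p^4\,\pr(\newsupremumtruncate>\initialcapital)\bigl(1 - \pr(\newsupremum>\initialcapital)\bigr)$, and a direct algebraic simplification produces
\begin{equation*}
\var(\unknownrv) - \frac{\cov(\unknownrv,\truncknownrv)^2}{\var(\truncknownrv)} = p^4 \cdot \frac{(1-\pr(\newsupremum>\initialcapital))\bigl(\pr(\newsupremum>\initialcapital)-\pr(\newsupremumtruncate>\initialcapital)\bigr)}{1-\pr(\newsupremumtruncate>\initialcapital)} \;\sim\; p^4\bigl(\pr(\newsupremum>\initialcapital) - \pr(\newsupremumtruncate>\initialcapital)\bigr),
\end{equation*}
as $\initialcapital \to \infty$, since both tail probabilities vanish.

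What remains is the asymptotic evaluation of the two tails. For $\newsupremumtruncate$, conditioning on $\geometric$ and applying the elementary subexponential identity $\pr\bigl(\max\{\excesshtclaimsize[1],\dots,\excesshtclaimsize[k]\}>\initialcapital\bigr)\sim k\,\excesshtclaimsizedistributioncomplement$ immediately yields $\pr(\newsupremumtruncate>\initialcapital) \sim q\,\excesshtclaimsizedistributioncomplement\sum_{k=2}^n kp^{k-2}$. For $\newsupremum$ I would use the identification $\remainingterms = p^2\,\pr(\newsupremum>\initialcapital)$ from \eqref{Equation: New target probability} together with \Cref{Property: Convolution of a subexponential distribution with a distribution of a lighter tail}: because $\supremumsurplusdiscardsymbol$ is phase\-/type and hence lighter\-/tailed than $\excesshtclaimsizerv\in\subexponentials$, for each fixed $k$ one has $\convolutiondiscard{k} \sim k\excesshtclaimsizedistributioncomplement$. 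The main technical obstacle will be justifying the interchange of the infinite sum over $\geometric$ with the subexponential limit; this requires a uniform\-/in\-/$k$ bound of Kesten type, which is available because the stability assumption $\claimratemix<1$ forces $p<1$.

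Once that interchange is established, one finds $\pr(\newsupremum>\initialcapital) \sim q\,\excesshtclaimsizedistributioncomplement\sum_{k=2}^\infty kp^{k-2}$, and the difference collapses to $q\,\excesshtclaimsizedistributioncomplement\sum_{k=n+1}^\infty kp^{k-2}$. Evaluating the shifted geometric series via $\sum_{k=n+1}^\infty kp^{k-2} = p^{n-1}(n+1-np)/q^2 = p^{n-1}(1+nq)/q^2$ and multiplying by $p^4/q$ yields the first asymptotic. For the ratio in \eqref{varred}, an analogous computation based on $\sum_{k=2}^\infty kp^{k-2}=(2-p)/q^2$ gives $\var(\unknownrv)/\simnumber \sim p^4(2-p)/q\cdot \excesshtclaimsizedistributioncomplement/\simnumber$; substituting $2-p = 1+q$ then reduces the ratio to the claimed $p^{n-1}(1+nq)/(1+q)$.
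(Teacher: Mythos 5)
Your proposal is correct and follows essentially the same route as the paper: you use the standard optimal control\-/variate variance identity, exploit the inclusion $\{\newsupremumtruncate>\initialcapital\}\subseteq\{\newsupremum>\initialcapital\}$ to reduce $\cov(\unknownrv,\truncknownrv)$ and the residual variance to a rational expression in $\pr(\newsupremum>\initialcapital)$ and $\pr(\newsupremumtruncate>\initialcapital)$, and then plug in the subexponential asymptotics for those two tails; the paper writes the same quantity as $\tfrac{1-(\correlation)^2}{\simnumber}\var\unknownrv$ but the algebra is identical. The one point where you go slightly beyond the paper is in explicitly flagging that passing the $u\to\infty$ limit through the infinite geometric sum for $\pr(\newsupremum>\initialcapital)$ needs a dominated-convergence/Kesten-type justification (possible since $p<1$, or alternatively by reading off the asymptotic from \eqref{heavyt} and the already\-/identified explicit first two terms) --- a detail the paper glosses over when it simply cites the finite\-/$n$ argument of \Cref{Theorem: Tail behaviour of the corrected discard approximation}.
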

\begin{proof}
  Since $\e \empiricalmeanunknown = \remainingterms$, we know from \cite{asmussen-SS} that the proposed estimator has variance
  \begin{equation}
      \frac{1-\big(\correlation\big)^2}{\simnumber}\, \var \unknownrv,
  \end{equation}
  with correlation coefficient $\correlation = \corr \big( \unknownrv, \truncknownrv \big)$. By the definition of \newsupremumtruncate, $\{ \newsupremumtruncate > \initialcapital \} \allowbreak \subseteq \{ \newsupremum > \initialcapital \}$ and consequently $\indicatorfunction[\{\newsupremum > \initialcapital \}]\cdot \indicatorfunction[\{\newsupremumtruncate > \initialcapital \}] = \indicatorfunction[\{\newsupremumtruncate > \initialcapital \}]$. We calculate,
  \begin{align*}
      \cov \big( \unknownrv, \truncknownrv \big)       &= \left( \frac{\perturbationparameter\htclaimrate}{1-\discardclaimrate} \right)^4 \cov \Big( \indicatorfunction[\{\newsupremum > \initialcapital \}]\cdot \indicatorfunction[\{\newsupremumtruncate > \initialcapital \}] \Big) \\
      &= \left( \frac{\perturbationparameter\htclaimrate}{1-\discardclaimrate} \right)^4 \left( \e \Big( \indicatorfunction[\{\newsupremum > \initialcapital \}] \indicatorfunction[\{\newsupremumtruncate > \initialcapital \}] \Big) - \e   \indicatorfunction[\{\newsupremum > \initialcapital \}] \e \indicatorfunction[\{\newsupremumtruncate > \initialcapital \}] \right) \\
      &= \left( \frac{\perturbationparameter\htclaimrate}{1-\discardclaimrate} \right)^4 \left( \pr \big( \newsupremumtruncate > \initialcapital \big) - \pr \big( \newsupremum > \initialcapital \big) \pr \big( \newsupremumtruncate > \initialcapital \big) \right)\\
      &= \left( \frac{\perturbationparameter\htclaimrate}{1-\discardclaimrate} \right)^4 \pr \big( \newsupremumtruncate > \initialcapital \big) \pr \big( \newsupremum \leq \initialcapital \big).
      \intertext{Similarly, we find}
      \var \big( \unknownrv \big) &= \left( \frac{\perturbationparameter\htclaimrate}{1-\discardclaimrate} \right)^4 \pr \big( \newsupremum > \initialcapital \big) \pr \big( \newsupremum \leq \initialcapital \big), \text{ and}\\
      \var \big( \truncknownrv \big) &= \left( \frac{\perturbationparameter\htclaimrate}{1-\discardclaimrate} \right)^4 \pr \big( \newsupremumtruncate > \initialcapital \big) \pr \big( \newsupremumtruncate \leq \initialcapital \big).
  \end{align*}
  Hence, it is immediate that
  \begin{equation}
      1- \big(\correlation\big)^2 = \frac{1 - \pr \big( \newsupremumtruncate > \initialcapital \big) / \pr \big( \newsupremum > \initialcapital \big)}{1 - \pr \big( \newsupremumtruncate > \initialcapital \big)}.
  \end{equation}
  Following \Cref{Theorem: Tail behaviour of the corrected discard approximation}, we calculate
    \begin{align*}
      \pr \big( \newsupremumtruncate > \initialcapital \big) \sim &\left( 1-\frac{\perturbationparameter\htclaimrate}{1-\discardclaimrate}\right) \sum_{k=2}^n k \left( \frac{\perturbationparameter\htclaimrate}{1-\discardclaimrate}\right)^{k-2} \excesshtclaimsizedistributioncomplement \\
      =& \frac{2-\frac{\perturbationparameter\htclaimrate}{1-\discardclaimrate} - (n+1) \left( \frac{\perturbationparameter\htclaimrate}{1-\discardclaimrate} \right)^{n-1} + n \left( \frac{\perturbationparameter\htclaimrate}{1-\discardclaimrate} \right)^{n} }{1-\frac{\perturbationparameter\htclaimrate}{1-\discardclaimrate}} \excesshtclaimsizedistributioncomplement
      \end{align*}
      and
       \begin{align*}
      \pr \big( \newsupremum > \initialcapital \big) \sim &\, \frac{2-\frac{\perturbationparameter\htclaimrate}{1-\discardclaimrate} }{1-\frac{\perturbationparameter\htclaimrate}{1-\discardclaimrate}} \excesshtclaimsizedistributioncomplement,
  \end{align*}
  as $\initialcapital \rightarrow \infty$. We finally obtain 
  \begin{gather*}
      \frac{\pr \big( \newsupremumtruncate > \initialcapital \big)}{\pr \big( \newsupremum > \initialcapital \big)}
      \rightarrow
      1 - \frac{(n+1) \left( \frac{\perturbationparameter\htclaimrate}{1-\discardclaimrate} \right)^{n-1} - n \left( \frac{\perturbationparameter\htclaimrate}{1-\discardclaimrate} \right)^{n}}{2-\frac{\perturbationparameter\htclaimrate}{1-\discardclaimrate}},
      \intertext{so that}
      1- \big(\correlation\big)^2 \rightarrow
      \frac{(n+1) \left( \frac{\perturbationparameter\htclaimrate}{1-\discardclaimrate} \right)^{n-1} - n \left( \frac{\perturbationparameter\htclaimrate}{1-\discardclaimrate} \right)^{n}}{2-\frac{\perturbationparameter\htclaimrate}{1-\discardclaimrate}},
  \end{gather*}
  and the statement of the theorem follows.
\end{proof}

The above theorem quantifies the asymptotic variance reduction for fixed $n$ as \initialcapital increases, this reduction being arbitrarily large when $n$ is increased sufficiently. 

\subsection{Conditional Monte Carlo}\label{Section: Approach 2 -- Conditional Monte Carlo}
While the approach of \Cref{Section: Approach 1 -- Max of heavy tails} is the focus of this paper, for purposes of comparison and completeness we are also interested in the performance of the alternative series representation for the conditional Monte Carlo estimate and its variance reduction proposed in \cite{asmussen2006improved}. To that end, let us recap here its idea and present its application to our series representation. Define $\zerosummand = \supremumsurplusdiscard[0]$ and $\summand = \supremumsurplusdiscard +  \excesshtclaimsize$, $k=1,2,\dots$, so that $\newsupremum \equalindistribution \zerosummand + \sum_{k=1}^{\geometric+2} \summand$, where $\geometric \sim Geom\left( \frac{1- \claimratemix}{1-\discardclaimrate}\right)$ as before. 
\Cref{Equation: New target probability} can then be written as
\begin{align*}
    \remainingterms =&\left(\frac{\perturbationparameter\htclaimrate}{1-\discardclaimrate}\right)^2 \pr ( \zerosummand +  \summand[1] + \dots + \summand[\geometric+2] > \initialcapital).
\intertext{Note that for fixed $k \geq 1$ and $\maxsummand := \max \{  \summand[1] , \dots, \summand\}$, we have}
    &\pr ( \zerosummand +  \summand[1] + \dots + \summand > \initialcapital)
    = k\, \pr ( \summation[k] > \initialcapital - \zerosummand,\summand  = \maxsummand)\\
    =\,& k\, \pr ( \summand > \maxsummand[k-1], \summand  >\initialcapital - \zerosummand- \summation[k-1])
    = k\, \e \summanddistributioncomplementsymbol \big( \maxsummand[k-1] \vee (\initialcapital  - \zerosummand - \summation[k-1]) \big),
\intertext{where \summanddistributioncomplementsymbol is the common c.c.d.f.\ of the \summand's and $\summation = \sum_{k=1}^\ell \summand$, $\summation[0]=0$. Consequently, 
the random variable}
    \akunknownrv = & \left(\frac{\perturbationparameter\htclaimrate}{1-\discardclaimrate} \right)^2 (\geometric+2) \summanddistributioncomplementsymbol \big( \maxsummand[\geometric + 1] \vee (\initialcapital  - \zerosummand - \summation[\geometric + 1]) \big),
\end{align*}
has the target probability \remainingterms as its expectation. Notice that this variable plays the same role as \unknownrv in the previous approach.

We can further introduce $\geometric \summanddistributioncomplementsymbol (\initialcapital)$ as a control variate for the number of summands (see e.g.\ \cite{ghamami2012improving}).

\begin{definition}\label{Definition: Conditional Monte Carlo estimator}
We use the control variate
\begin{align*}
    \akknownrv =& \left(\frac{\perturbationparameter\htclaimrate}{1-\discardclaimrate} \right)^2 (\geometric + 2)  \summanddistributioncomplementsymbol (\initialcapital ). 
\intertext{The resulting approximation $\akapproxremainingterms = \e \akknownrv $ of \remainingterms then is}
    \akapproxremainingterms :=&  \left(\frac{\perturbationparameter\htclaimrate}{1-\discardclaimrate} \right)^2  \Bigg( \frac{\perturbationparameter\htclaimrate}{1-\claimratemix} + 2 \Bigg)  \summanddistributioncomplementsymbol (\initialcapital ) .
\end{align*}
\end{definition}

This control variate leads to the following Asmussen\-/Kroese (AK)\-/type estimator:
\begin{equation}\label{xxf}
    \akapproxruinmix{\simnumber}:= \akempiricalmeanunknown + \akempiricalcovariance \big( \akempiricalmeanknown -  \akapproxremainingterms \big),
\end{equation} 
where \akempiricalmeanunknown, \akempiricalmeanknown, and \akempiricalcovariance are calculated via \eqref{Equation: General expectations for the estimator} using \akunknownrv and \akknownrv.

\begin{remark}
 An alternative approach is to set $\firstsummand = \supremumsurplusdiscard[0] + \supremumsurplusdiscard[1] +  \excesshtclaimsize[1]$ and $\summand = \supremumsurplusdiscard +  \excesshtclaimsize$, $k=2,3,\dots$ and write \Cref{Equation: New target probability} as
 \begin{align*}
    \remainingterms &=\left(\frac{\perturbationparameter\htclaimrate}{1-\discardclaimrate}\right)^2 \pr ( \firstsummand +  \summand[2] + \dots + \summand[\geometric+2] > \initialcapital).
\end{align*}
Observe that all the random variables on the right hand side of this  equation are heavy\-/tailed and independent, but not identically distributed. Thus, using the AK estimator for non i.i.d.\ random variables established in \cite{chan2011rare}, we could instead construct a control variate based on the conditional Monte Carlo estimator
\begin{align*}
    \estimator = & \left(\frac{\perturbationparameter\htclaimrate}{1-\discardclaimrate} \right)^2 \Bigg(
     \firstsummanddistributioncomplementsymbol \big( \maxzerosummand[1] \vee (\initialcapital - \summation[\geometric+1] + \summand[1]) \big)   + (\geometric + 1) \summanddistributioncomplementsymbol \big( \maxzerosummand[(\geometric+2)] \vee (\initialcapital - \summation[\geometric] - \zerosummand) \big) \Bigg),
\end{align*}
where $\maxzerosummand[1] = \max \{ \summand[2] , \dots, \summand[\geometric+2]\}$ and $\maxzerosummand = \max \{ \firstsummand ,  \summand[2] , \dots , \summand[k-1], \summand[k+1], \summand[\geometric+2]\}$.
\end{remark}

\subsection{Comparison with the Pollaczek-Khinchine expansion}\label{Section: Comparison with the Pollaczek-Khinchine expansion}
For reference and the purpose of comparison, we also consider the estimators analogous to the ones in  \Cref{Section: Approach 1 -- Max of heavy tails,Section: Approach 2 -- Conditional Monte Carlo} using the usual PK series expansion of the ruin probability in \eqref{Equation: Pollaczek-Khinchine formula mixture model}, which we rewrite as 
\begin{align*}
    \ruinmix   =&  \underbrace{(1-\claimratemix)\claimratemix \,\excessclaimsizedistributionmixcomplement }_{explicit} 
    +\underbrace{(1-\claimratemix)\sum_{k=2}^\infty \claimratemix^k \big( 1- \excessclaimsizedistributionmixconvolution{k} \big)}_{:=\pkremainingterms}.
  \end{align*}
Define the random variables $\pkgeometric \sim Geom(1-\claimratemix) $,
\begin{align*}
    \pknewsupremum &= \sum_{k=1}^{\pkgeometric+2} \excessclaimsizemix, \\
    \pknewsupremumtruncate &= \max \{ \excessclaimsizemix[1],\dots,\excessclaimsizemix[\pkgeometric+2] \}   \indicatorfunction[\{\pkgeometric \leq n -2 \}],
\end{align*}
and let $\pksummation = \sum_{k=1}^n \excessclaimsizemix $ as well as  $\pkmaxsummand = \max \{  \excessclaimsizemix[1] , \dots, \excessclaimsizemix\}$. With this notation, the following equations define the analogous control variate estimators of \pkremainingterms:
\begin{alignat*}{2}
    \pkunknownrv &=  \claimratemix^2 \indicatorfunction[\{\pknewsupremum > \initialcapital \}] \qquad \qquad
    \pkakunknownrv &&=  \claimratemix^2 (\pkgeometric+2) \excessclaimsizedistributionmixcomplement[ {\pkmaxsummand[\pkgeometric + 1] \vee (\initialcapital  - \pksummation[\pkgeometric + 1]) }]\\
    \pkknownrv &=  \claimratemix^2 \indicatorfunction[\{\pknewsupremumtruncate > \initialcapital \}] \qquad \qquad
    \pkakknownrv &&=  \claimratemix^2 (\pkgeometric + 2)  \excessclaimsizedistributionmixcomplement,
\end{alignat*}
and the associated empirical estimator
\begin{equation}\label{est1_pk}
    \estimapproxremainingtermsPK{n}{\simnumber}:=\empiricalmeanunknownPK+\empiricalcovariancePK\big(\empiricalmeanknownPK-\approxremainingtermsPK{n} \big).
\end{equation}

Observe now that the distributional behaviour of the variable \excessclaimsizemixrv is slightly different from that of \excesshtclaimsize. Recall that $\excessclaimsizemixrv = \bernoullirv \excessphclaimsize + (1-\bernoullirv)\excesshtclaimsize$, where $\bernoullirv \sim Bernoulli \big( \discardclaimrate/(\discardclaimrate + \perturbationparameter \htclaimrate) \big)$. Hence,
\begin{align*}
    \pr(\excessclaimsizemixrv > \initialcapital) &=\frac{\discardclaimrate}{\discardclaimrate + \perturbationparameter \htclaimrate} \,\pr(\excessphclaimsizerv > \initialcapital)+\frac{\perturbationparameter\htclaimrate}{\discardclaimrate + \perturbationparameter \htclaimrate}\, \pr(\excesshtclaimsizerv > \initialcapital)\sim \frac{\perturbationparameter\htclaimrate}{\discardclaimrate + \perturbationparameter \htclaimrate} \,\pr(\excesshtclaimsizerv > \initialcapital),
\end{align*}
as $\initialcapital \to \infty$. Moreover, since \excesshtclaimsizerv is subexponential, the above relation implies that \excessclaimsizemixrv is subexponential as well. Consequently, 
\begin{align*}
    \pr(\pkmaxsummand> \initialcapital)
    \sim k \,\frac{\perturbationparameter\htclaimrate}{\discardclaimrate + \perturbationparameter \htclaimrate}\,\pr(\excesshtclaimsizerv > \initialcapital)
    = k \,\frac{\perturbationparameter\htclaimrate}{\discardclaimrate + \perturbationparameter \htclaimrate}\, \excesshtclaimsizedistributioncomplement.
\end{align*}

\noindent Using the above asymptotic expression and following the proof of \Cref{Theorem: Variance reduction}, we obtain the next result.

\begin{theorem}\label{thm39}
  For each fixed $n \in \naturals$, the variance of the estimator \eqref{est1_pk}
  behaves asymptotically as
  \begin{equation*}
      \var \big( \estimapproxremainingtermsPK{n}{\simnumber} \big) \sim  \claimratemix ^{n+3}
      \frac{1+  n \left( 1-\claimratemix \right)}{1-\claimratemix} \cdot \frac{\perturbationparameter \htclaimrate}{\claimratemix} \cdot \frac{\excesshtclaimsizedistributioncomplement}{\simnumber}, \qquad \text{as } \initialcapital \rightarrow \infty,
  \end{equation*}
  and satisfies
  \begin{equation}\label{varredpk}
      \frac{\var \big( \estimapproxremainingtermsPK{n}{\simnumber} \big)}{\var \big( \empiricalmeanunknownPK \big)} \rightarrow \claimratemix^{n-1}
      \frac{1+  n \left( 1-\claimratemix \right)}{1+(1-\claimratemix)  }, \qquad \text{as } \initialcapital \rightarrow \infty.
  \end{equation}
\end{theorem}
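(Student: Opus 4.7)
The plan is to mimic the proof of \Cref{Theorem: Variance reduction} step by step, making two substitutions throughout: the geometric parameter $\tfrac{1-\claimratemix}{1-\discardclaimrate}$ is replaced by $1-\claimratemix$, and the role played by the subexponential tail $\excesshtclaimsizedistributioncomplement$ is now played by the mixture tail $\pr(\excessclaimsizemixrv>\initialcapital)$. First I would observe that $\{\pknewsupremumtruncate>\initialcapital\}\subseteq\{\pknewsupremum>\initialcapital\}$ since $\pknewsupremumtruncate$ is either zero or a maximum bounded above by the corresponding sum. The standard control\-/variate variance identity from \cite{asmussen-SS} then gives
\begin{equation*}
    \var\big(\estimapproxremainingtermsPK{n}{\simnumber}\big) \;=\; \frac{1-\big(\correlation\big)^2}{\simnumber}\,\var\pkunknownrv,
    \qquad
    1-\big(\correlation\big)^2 \;=\; \frac{1-\pr(\pknewsupremumtruncate>\initialcapital)/\pr(\pknewsupremum>\initialcapital)}{1-\pr(\pknewsupremumtruncate>\initialcapital)},
\end{equation*}
by exactly the same computation that produced the analogous identity in \Cref{Theorem: Variance reduction} (using that the product of indicators collapses to $\indicatorfunction[\{\pknewsupremumtruncate>\initialcapital\}]$).

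Next I would assemble the tail asymptotics. The paragraph preceding the theorem already shows that \excessclaimsizemixrv is subexponential with $\pr(\excessclaimsizemixrv>\initialcapital)\sim\tfrac{\perturbationparameter\htclaimrate}{\claimratemix}\excesshtclaimsizedistributioncomplement$, so by standard subexponential asymptotics and \Cref{Property: Convolution of a subexponential distribution with a distribution of a lighter tail} one has $\pr(\pksummation[k]>\initialcapital)\sim k\,\pr(\excessclaimsizemixrv>\initialcapital)$ and $\pr(\pkmaxsummand>\initialcapital)\sim k\,\pr(\excessclaimsizemixrv>\initialcapital)$. Averaging over $\pkgeometric\sim\mathrm{Geom}(1-\claimratemix)$ and using $\sum_{k=0}^{\infty}\claimratemix^k(k+2)=(2-\claimratemix)/(1-\claimratemix)^2$ yields
\begin{align*}
    \pr(\pknewsupremum>\initialcapital) &\sim \frac{(2-\claimratemix)\perturbationparameter\htclaimrate}{(1-\claimratemix)\claimratemix}\,\excesshtclaimsizedistributioncomplement,\\
    \pr(\pknewsupremumtruncate>\initialcapital) &\sim \frac{\perturbationparameter\htclaimrate}{\claimratemix}\sum_{k=0}^{n-2}(1-\claimratemix)\claimratemix^k(k+2)\,\excesshtclaimsizedistributioncomplement,
\end{align*}
as $\initialcapital\to\infty$. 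Taking the ratio, the tail factor $\excesshtclaimsizedistributioncomplement$ and the mixture factor $\perturbationparameter\htclaimrate/\claimratemix$ cancel, and a direct computation of $1-\sum_{k=0}^{n-2}\claimratemix^k(k+2)/\sum_{k=0}^{\infty}\claimratemix^k(k+2)=\claimratemix^{n-1}\tfrac{(n+1)-n\claimratemix}{2-\claimratemix}$ delivers the limit of $1-(\correlation)^2$, which after rewriting $(n+1)-n\claimratemix=1+n(1-\claimratemix)$ and $2-\claimratemix=1+(1-\claimratemix)$ is precisely \eqref{varredpk}. Multiplying this limit by $\var(\empiricalmeanunknownPK)\sim\claimratemix^4\pr(\pknewsupremum>\initialcapital)/\simnumber$ and simplifying then gives the absolute asymptotic expression.

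The only real obstacle is bookkeeping: two distinct subexponential estimates must be combined (one from the mixture structure of \excessclaimsizemixrv, one from the geometric compound), and the powers of $\claimratemix$ and the factor $\perturbationparameter\htclaimrate/\claimratemix$ must be tracked carefully through the ratio. No new probabilistic input is required beyond what is already cited in \Cref{secsub} and the observations preceding the theorem statement.
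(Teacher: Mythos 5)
Your proposal is correct and follows exactly the route the paper itself indicates—namely, rerunning the proof of \Cref{Theorem: Variance reduction} with $\geometric\sim\mathrm{Geom}\bigl(\tfrac{1-\claimratemix}{1-\discardclaimrate}\bigr)$ replaced by $\pkgeometric\sim\mathrm{Geom}(1-\claimratemix)$ and the subexponential tail \excesshtclaimsizedistributioncomplement replaced by $\pr(\excessclaimsizemixrv>\initialcapital)\sim\tfrac{\perturbationparameter\htclaimrate}{\claimratemix}\,\excesshtclaimsizedistributioncomplement$. The geometric-series bookkeeping, the cancellation of $2-\claimratemix=1+(1-\claimratemix)$, and the final factor $\perturbationparameter\htclaimrate/\claimratemix$ are all handled correctly.
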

It follows that we can compare the asymptotic effect on the variance between the two different series expansions for the ruin probability, as well as the effect on the proportion of variance reduction due to the use of control variates:
\begin{corollary}\label{cor39}
  For each fixed $n \in \naturals$, the following relations hold:
  \begin{equation}\label{cor391}
      \frac{\var \big( \estimapproxremainingterms{n}{\simnumber} \big)}{\var \big( \estimapproxremainingtermsPK{n}{\simnumber} \big)} \sim 
      \bigg[ \frac{\perturbationparameter\htclaimrate}{1 - \discardclaimrate} \bigg/ \claimratemix \bigg]^{n+2}
      \frac{1+  n \left( \frac{1-\claimratemix}{1 - \discardclaimrate} \right) }{1+  n ( 1-\claimratemix)}, \qquad \text{as } \initialcapital \rightarrow \infty,
  \end{equation}
  and
  \begin{equation*}
      \left[\frac{\var \big( \estimapproxremainingterms{n}{\simnumber} \big)}{\var \big( \empiricalmeanunknown \big)}\right]\cdot\left[\frac{\var \big( \estimapproxremainingtermsPK{n}{\simnumber} \big)}{\var \big( \empiricalmeanunknownPK \big)}\right]^{-1} \rightarrow \bigg[ \frac{\perturbationparameter\htclaimrate}{1 - \discardclaimrate} \bigg/ \claimratemix \bigg]^{n-1}
      \frac{1+  n \left( \frac{1-\claimratemix}{1 - \discardclaimrate} \right)}{1+  n ( 1-\claimratemix )} \cdot \frac{1+ ( 1-\claimratemix )}{1+ \left( \frac{1-\claimratemix}{1 - \discardclaimrate} \right)}, \qquad \text{as } \initialcapital \rightarrow \infty.
  \end{equation*}
\end{corollary}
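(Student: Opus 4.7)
The plan is to derive both identities by direct division of the asymptotic expressions already established in \Cref{Theorem: Variance reduction} and \Cref{thm39}. Since both variances have been pinned down up to asymptotic equivalence with a common factor of $\excesshtclaimsizedistributioncomplement/\simnumber$ as $\initialcapital\to\infty$, the task reduces entirely to algebraic simplification of the resulting quotients. No additional probabilistic input or subexponential machinery is required at this stage, as all tail information is already absorbed into the two source theorems.

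For the first assertion \eqref{cor391}, I would form the ratio
$$\frac{\var \big( \estimapproxremainingterms{n}{\simnumber} \big)}{\var \big( \estimapproxremainingtermsPK{n}{\simnumber} \big)} \sim \frac{\left(\frac{\perturbationparameter\htclaimrate}{1-\discardclaimrate}\right)^{n+3}}{\claimratemix^{n+3}} \cdot \frac{1+n\frac{1-\claimratemix}{1-\discardclaimrate}}{\frac{1-\claimratemix}{1-\discardclaimrate}} \cdot \frac{1-\claimratemix}{1+n(1-\claimratemix)} \cdot \frac{\claimratemix}{\perturbationparameter\htclaimrate},$$
where the common $\excesshtclaimsizedistributioncomplement/\simnumber$ cancels and the factor $\perturbationparameter\htclaimrate/\claimratemix$ on the right comes from isolating the tail-coefficient piece in the denominator of \Cref{thm39}. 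The simplification $\frac{1-\claimratemix}{(1-\claimratemix)/(1-\discardclaimrate)} = 1-\discardclaimrate$ combines with the trailing $\claimratemix/(\perturbationparameter\htclaimrate)$ to reduce the leading exponent from $n+3$ to $n+2$, yielding exactly the stated expression.

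For the second assertion, I would instead divide the normalized limits from \eqref{varred} and \eqref{varredpk}. The $(\cdot)^{n-1}$ factors combine cleanly into $\bigl[\tfrac{\perturbationparameter\htclaimrate}{1-\discardclaimrate}\,/\,\claimratemix\bigr]^{n-1}$, while the remaining scalars split naturally into the two fractions $\frac{1+n(1-\claimratemix)/(1-\discardclaimrate)}{1+n(1-\claimratemix)}$ and $\frac{1+(1-\claimratemix)}{1+(1-\claimratemix)/(1-\discardclaimrate)}$ appearing in the statement. This step is immediate by inspection.

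The only minor point requiring care is the bookkeeping of the single factor $\perturbationparameter\htclaimrate/\claimratemix$ in the first identity, which is responsible for the seemingly surprising shift of the exponent from $n+3$ (in both individual variances) down to $n+2$ in their ratio; otherwise an off-by-one error could easily slip in. Apart from this, I anticipate no substantive obstacle: the corollary is essentially a corollary in the strict sense, obtained by elementary manipulation of two asymptotic equivalences.
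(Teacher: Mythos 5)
Your proof is correct and takes the only natural route, which is also the paper's (implicit) one: the corollary is stated without a separate proof because it follows by dividing the two asymptotic expressions from \Cref{Theorem: Variance reduction} and \Cref{thm39}. Your bookkeeping of the factor $\perturbationparameter\htclaimrate/\claimratemix$ and of $(1-\discardclaimrate)/\perturbationparameter\htclaimrate = \big(\frac{\perturbationparameter\htclaimrate}{1-\discardclaimrate}\big)^{-1}$, which together drop the exponent from $n+3$ to $n+2$ in \eqref{cor391}, is exactly right, and the second assertion is indeed immediate from \eqref{varred} and \eqref{varredpk}.
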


Notice that the inequality $\frac{\perturbationparameter\htclaimrate}{1 - \discardclaimrate} < \claimratemix$ is actually equivalent to the net profit condition $\claimratemix<1$. As a consequence, the terms involving powers of $\frac{\perturbationparameter\htclaimrate}{1 - \discardclaimrate} / \claimratemix < 1$ in the above result guarantee (for large $n$) a better performance of our new series representation over the classical Pollaczek\-/Khinchine expansion.

\begin{remark}
 Note that the quantity \discardclaimrate depends on \perturbationparameter and that  $\frac{\perturbationparameter\htclaimrate}{1 - \discardclaimrate}$ is increasing in \perturbationparameter for each fixed aggregate claim rate \claimratemix. Correspondingly, the smaller the proportion \perturbationparameter of heavy\-/tailed claims is, the more our new series representation outperforms the classical Pollaczek\-/Khinchine expansion. The latter is intuitive, since the largest term will then dominate the others even more strongly, making our approximation even more efficient. The above expressions allow to quantify this effect.
 
 
\end{remark}

\section{Numerical experiments}\label{Section: Numerical results}

In this section, we test and numerically illustrate the efficiency of our proposed technique, and compare it to the analogous classical simulation techniques based on the PK representation \eqref{Equation: Pollaczek-Khinchine formula mixture model} (see also  \cite[Ch.XV.2]{asmussen-RP}).

To perform our numerical experiments, we need to specify a mixture claim size distribution for which the distributions of $\supremumsurplusdiscard[0]+\supremumsurplusdiscard[1]+\excesshtclaimsize[1]$ and $\supremumsurplusdiscard[1]+\excesshtclaimsize[1]$ can be evaluated explicitly; note that the second convolution is only required for the AK estimator.

\subsection{Mixture of exponential and Pareto claim sizes}
For the phase\-/type claim sizes we choose an exponential distribution with rate \exponentialrate, i.e.\  $\phclaimsizedistributioncomplement =\excessphclaimsizedistributioncomplement = e^{-\exponentialrate \initialcapital}$, and $\meanphclaimsize = 1/\exponentialrate$. For the heavy\-/tailed claim sizes we consider a shifted Pareto distribution with shape parameter $\paretoshape>1$ and scale $\paretoscale>0$, i.e.\  $\displaystyle \htclaimsizedistributioncomplement = \left(1 + \initialcapital/\paretoscale \right)^{- \paretoshape}$ and  $\displaystyle \excesshtclaimsizedistributioncomplement = \left(1 + \initialcapital/\paretoscale \right)^{- (\paretoshape-1)}$, $u\ge 0$, with $\displaystyle \meanhtclaimsize = \paretoscale/(\paretoshape-1$).

The two tail probabilities of the aforementioned sums of variables are explicitly available. For instance, for $\mu=3$, $a=2$, $b=1$, $\epsilon=0.1$ and $\rho=0.99$ they are given by

\begin{align}
    \pr(\supremumsurplusdiscard[0]+\supremumsurplusdiscard[1]+\excesshtclaimsize[1] > \initialcapital) =& \frac{1}{25.600.000.000 (1+ \initialcapital)} \times \Bigg( -41200 (223427+264627 \initialcapital) \nonumber\\
    &+ 297 (1+\initialcapital) \bigg( 400 e^{-309 \initialcapital/400}(292973+91773 \initialcapital) \nonumber\\
    &+ 31827(1691 + 891 \initialcapital) e^{-309 (1+\initialcapital)/400} \left( \text{Ei} \Big( \frac{309(1+\initialcapital)}{400} \Big) - \text{Ei} \Big( \frac{309}{400} \Big) \right) \bigg) \Bigg)\nonumber\\
    \pr(\summand[1] > \initialcapital) =& \frac{103}{400(1+\initialcapital)} + \frac{297}{320000}\times \Bigg( 800 e^{-309\initialcapital/400}\nonumber \\
    &+618 e^{-309 (1+\initialcapital)/400} \left( \text{Ei} \Big( \frac{309(1+\initialcapital)}{400} \Big) - \text{Ei} \Big( \frac{309}{400} \Big) \right)\Bigg),\label{eix}
\end{align}
where $\text{Ei}(z) = -\int_{-z}^\infty \frac{e^{-t}}{t} dt$ is the exponential integral. For all other parameters that we consider, analogous formulas are used. Finally, we calculate
$
    \pr \big( \max \{ \excesshtclaimsize[1],\dots,\excesshtclaimsize[k] \} > \initialcapital \big)
    = 1- \big( 1- \left(1 + \initialcapital/\paretoscale \right)^{- (\paretoshape-1)} \big)^k
$.

\subsection{Parameters}
In all our experiments, we fixed $\exponentialrate = 3$ and $\paretoscale=1$, while we considered various combinations for the remaining parameters. Motivated by \cite{vatamidou2013correctedrisk}, we focused mainly on the cases $\claimratemix \in \{0.9, 0.99, 0.999\}$, where simulations involving heavy tails can be considerably problematic (known as the heavy\-/traffic regime in the related queueing context, cf.\ \cite{asm03}) and where the first two terms of \eqref{Equation: Definition of the nth order approximation of the ruin probability} are known to be unable to close the gap between the approximation and the exact ruin probability even for values of $\perturbationparameter=0.1$. For the remaining parameters we tested $\perturbationparameter \in \{ 0.1, 0.7\}$ and $\paretoshape \in \{ 2,3,4\}$.

\subsection{Results}

In all the presented examples, the order of \approxruinmix{n} is equal to $n=100$ and the number of simulations is $\simnumber = 10,000$.

We plot in \Cref{Figure.Error bounds} the simulated ruin probability that is obtained using the Monte Carlo estimator \eqref{Equation: Definition for the unknonw random variable for the control variate} together with the heavy\-/tail approximation \eqref{heavyt}. The dashed black lines depict the error bounds in \Cref{Theorem: Error bounds for the corrected discard approximation}. We observe in both graphs that the lower bound converges to the heavy\-/tail approximation \eqref{heavyt} as $\initialcapital \rightarrow \infty$. This behaviour is observed for any $n$ and is in accordance with theory. A similar statement holds for any \initialcapital as $n \rightarrow \infty$. Further empirical tests show that this convergence in $n$ is remarkably fast. However, one cannot draw a safe conclusion for which choice of parameters the lower bound is below or above the heavy\-/tail approximation. Finally, we observe in the left graph that the upper bound is not very tight, as expected by \Cref{Theorem: Error bounds for the corrected discard approximation}, since the chosen parameters give $\perturbationparameter\htclaimrate/(1-\discardclaimrate) = 0.875$. The bound becomes tighter in the right graph, where  $\perturbationparameter\htclaimrate/(1-\discardclaimrate) = 0.25$.

From this point on, let us fix the parameters to $\paretoshape=2$, $\perturbationparameter=0.1$, and $\claimratemix=0.99$ to allow for comparability between \Cref{Figure.MC,Figure.AK}. Moreover, we use a log\-/log scale. In \Cref{Figure.MC}, we plot the MC estimate \eqref{Equation: Definition for the unknonw random variable for the control variate}  (blue solid line) together with the control variate extension \eqref{est1} (black dashed line) against the heavy-tail approximation \eqref{heavyt}. We observe that the control variate technique outperforms the crude estimate 
\eqref{Equation: Definition for the unknonw random variable for the control variate} across the entire range of \initialcapital (see the variance plot on the right). \Cref{Figure.MC} also compares the simulation results with the ones based on the classical PK formula described in \Cref{Section: Comparison with the Pollaczek-Khinchine expansion}. For the crude version, the latter are competitive for large $u$, but perform worse for small \initialcapital. However, for the control variate, our new approach is always significantly and convincingly better. This nicely illustrates the theoretical asymptotic results of Section \ref{Section: Control variate}: note that for the present choice of parameters the control variate asymptotically reduces the variance by a factor 0.09 (the constant on the right-hand side of \eqref{varred}) for our series representation, to be compared with 0.73 for the analogous constant on the right-hand side of \eqref{varredpk} for the PK representation. Related to that, the constant on the right-hand side of \eqref{cor391} in \Cref{cor39} is 0.12, which means that our series representation reduces the asymptotic variance by almost 90\%, when control variates are used in both cases.  \\


In \Cref{Figure.AK}, we plot the simulated ruin probability with the AK estimator (blue solid line) and its control variate extension from \Cref{Section: Approach 2 -- Conditional Monte Carlo} as a function of the heavy\-/tail approximation \eqref{heavyt}. We consider in the plot both the PK and our new series expansion. One recognises that the asymptotic behaviour according to \eqref{heavyt} (red dotted line) is recovered for all four estimators for sufficiently large \initialcapital. The right graph illustrates that the introduction of the control variate is a significant improvement in terms of variance reduction for both the PK and our series, and that the two latter approaches perform similarly. The overall variance is much lower than for the method underlying  \Cref{Figure.MC}. However, one should keep in mind that in terms of computation time the AK estimator in \Cref{Figure.AK} is much more time\-/consuming (about 20--50 times in our implementations), as the integrals \eqref{eix} have to be evaluated \simnumber times, whereas for the method in \Cref{Figure.MC} only once for the explicit term in front.


For large $\rho$, the number of summands tends to be large, and the results of the presented simulations 
suggest that the approximation 
\begin{equation}\label{Equation: Asymptotic behaviour of the sum in our expansion}
    \pr(\supremumsurplusdiscard[0]+\supremumsurplusdiscard[1]+\dots+\supremumsurplusdiscard[k]+\excesshtclaimsize[1]+\dots+\excesshtclaimsize[k] > \initialcapital) \approx \pr \big( \max \{ \excesshtclaimsize[1],\dots,\excesshtclaimsize[k] \} > \initialcapital \big)
\end{equation}
is better than the one employed using the usual PK series expansion
\begin{equation}\label{Equation: Asymptotic behaviour of the sum in the PK}
    \pr(\excessclaimsizemix[1]+\dots+\excessclaimsizemix[k] > \initialcapital) \approx \pr \big( \max \{ \excessclaimsizemix[1],\dots,\excessclaimsizemix[k] \} > \initialcapital \big).
\end{equation}

Intuitively, the latter is comprised of mixtures of heavy\-/tailed and light\-/tailed variables, and hence the number of heavy tailed variables is thinned down, which is a drawback that our new method does not have. This is further supported by the plot in the left panel of \Cref{Figure.cor}, where the empirical correlations between the control variates are given. Concretely, when simulating from \eqref{Equation: Asymptotic behaviour of the sum in the PK}, only $100\cdot \frac{\perturbationparameter \htclaimrate}{\claimratemix}$\% of our \excessclaimsizemix's will actually be heavy\-/tailed and thus one loses too much information from the original presence of heavy\-/tailed \excesshtclaimsize's, in contrast to \eqref{Equation: Asymptotic behaviour of the sum in our expansion} where only the light tails are omitted and all heavy tails are kept. Consequently, the new control variate is much more efficient, cf.\ the factors $\perturbationparameter \htclaimrate/\claimratemix$ in \Cref{cor39}. In contrast, for the AK estimator the control variate does not significantly differ for the two  series representations, and therefore -- while the control variate itself is a huge improvement over the crude estimate (cf.\ \Cref{Figure.AK} (right)) -- 
there is no improvement from using the alternative representation. 

\begin{figure}[h]
        \center{ \includegraphics[width=0.48\textwidth]{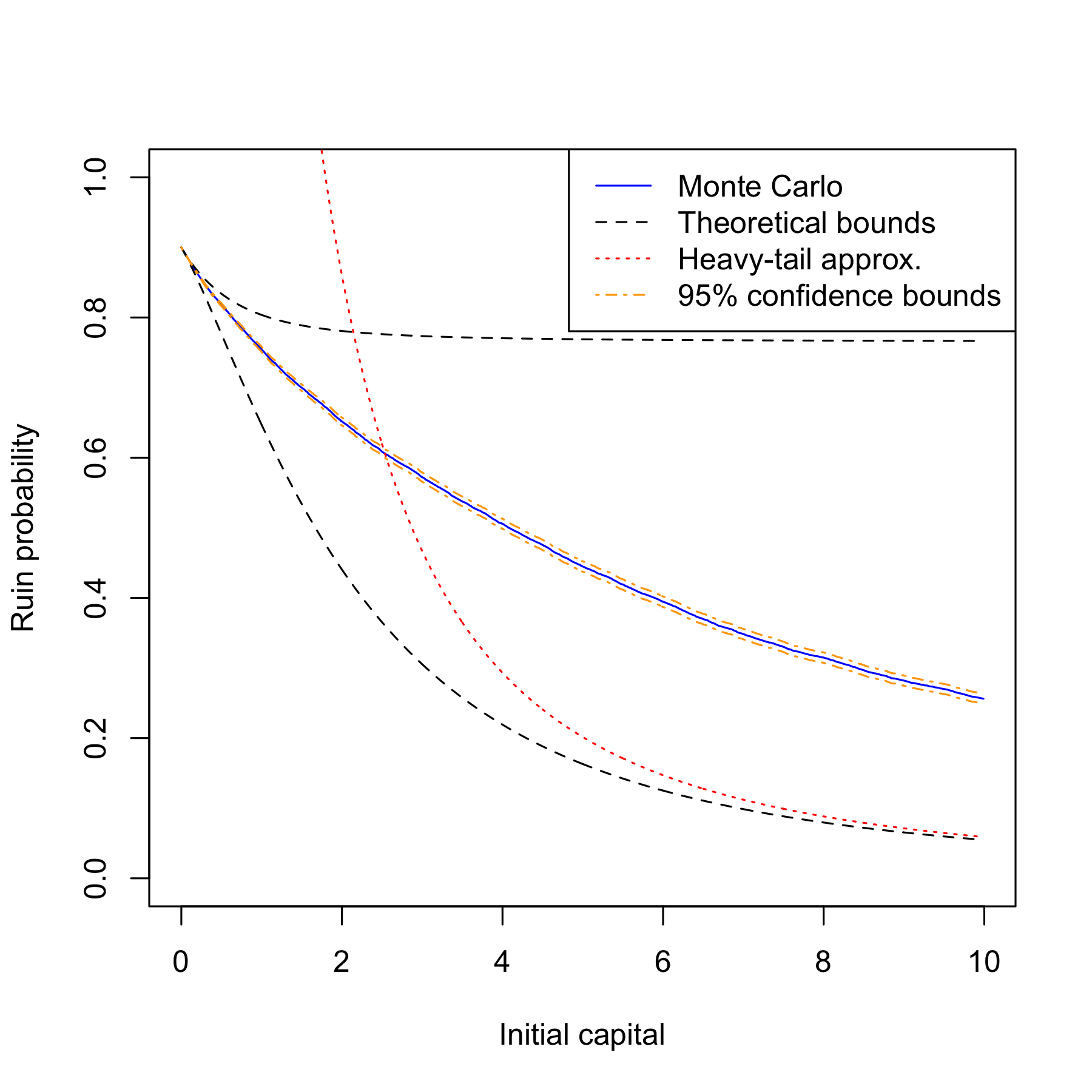} \hspace{0.2cm} \includegraphics[width=0.48\textwidth]{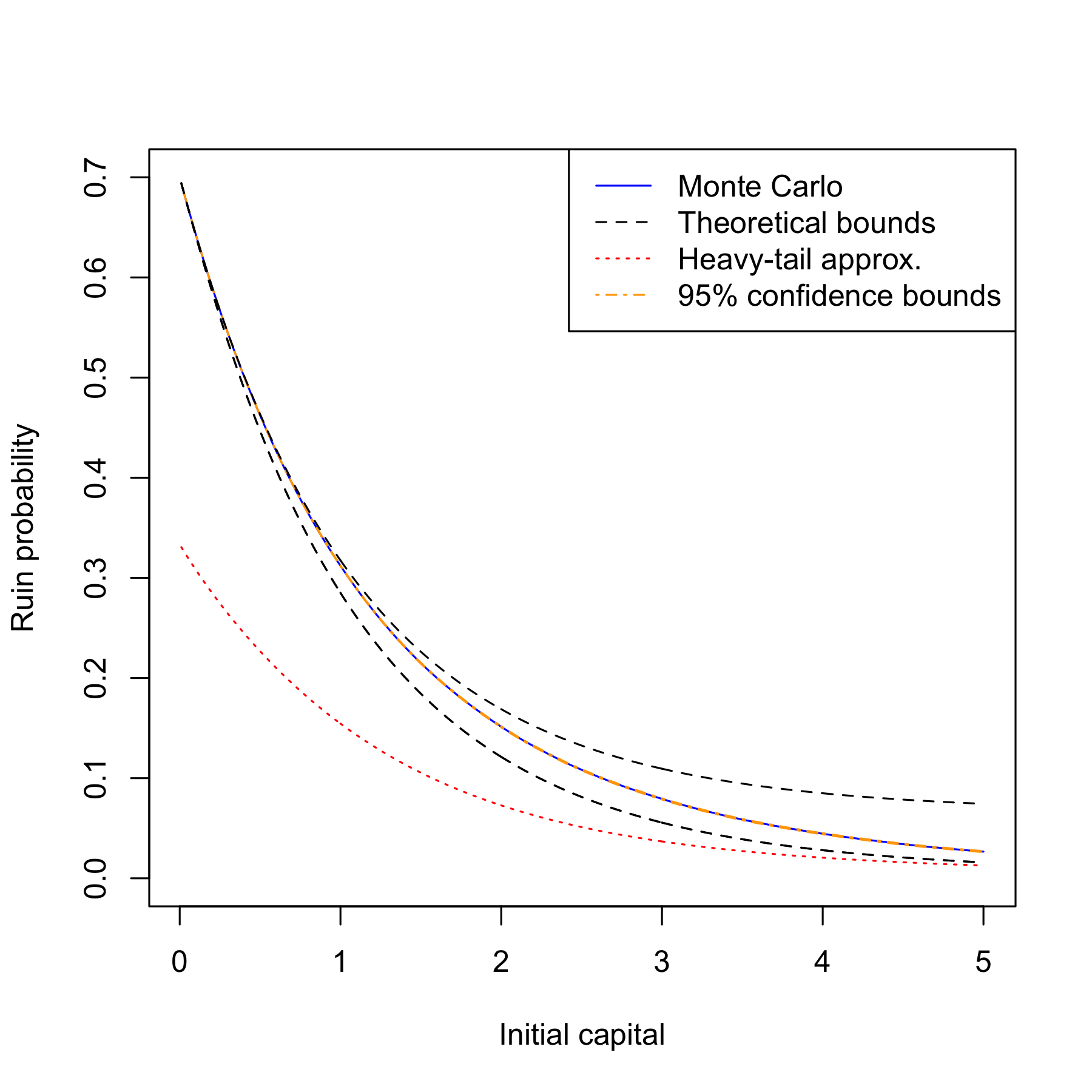}}
        \caption{The simulated ruin probability with MC estimator \eqref{Equation: Definition for the unknonw random variable for the control variate} (blue solid line together with its 95\% confidence interval in orange) and the heavy\-/tail approximation (red dotted line), as a function of the initial capital \initialcapital. The black dashed lines represent the error bounds in \Cref{Theorem: Error bounds for the corrected discard approximation}. Model parameters: $\paretoshape=3$ (both) and $\{\perturbationparameter, \claimratemix\}= \{0.7,0.9\}$ (left) or $\{\perturbationparameter, \claimratemix\}= \{0.1,0.7\}$ (right). }\label{Figure.Error bounds}
\end{figure}

\begin{figure}[h]
        \center{ \includegraphics[width=0.48\textwidth]{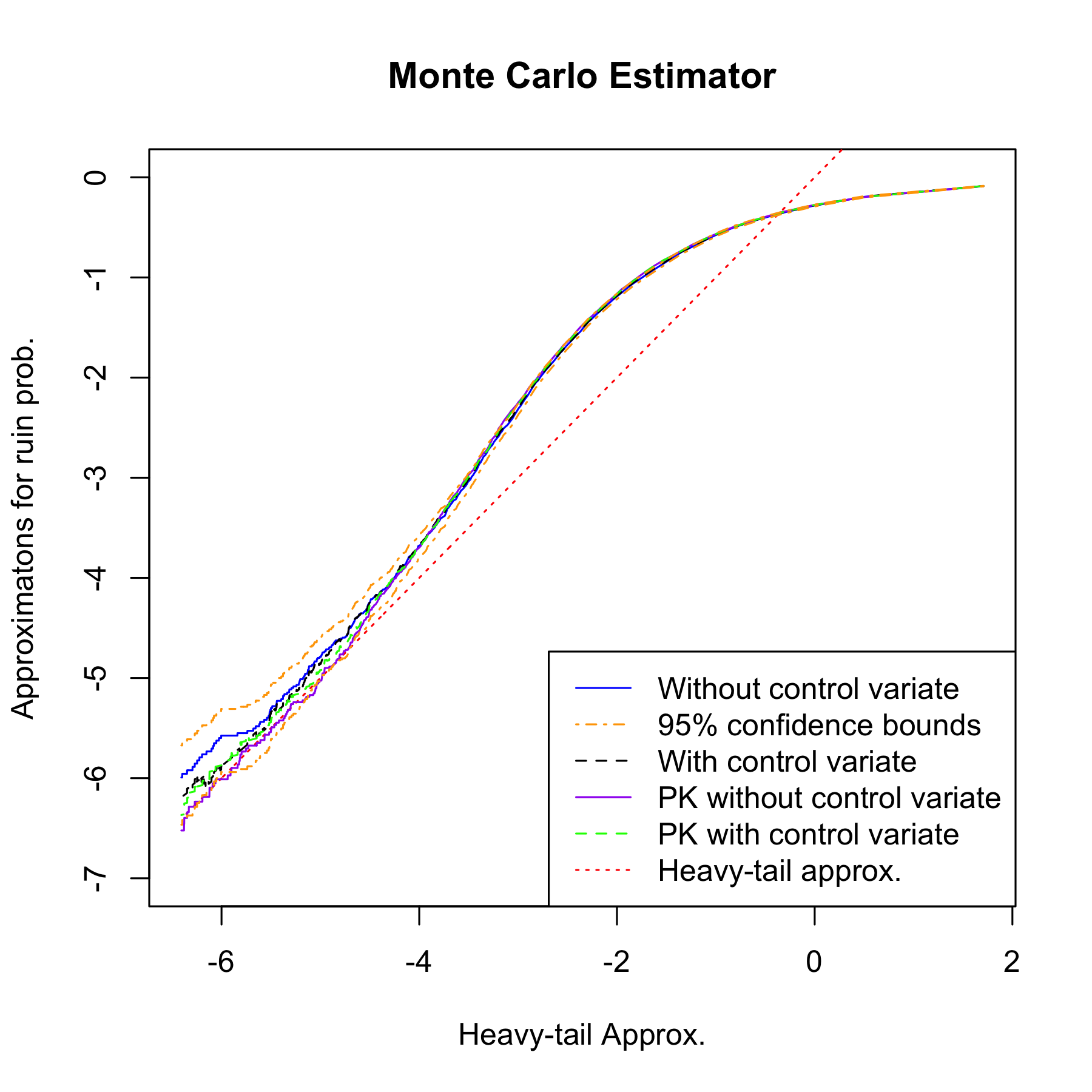} \hspace{0.2cm} \includegraphics[width=0.48\textwidth]{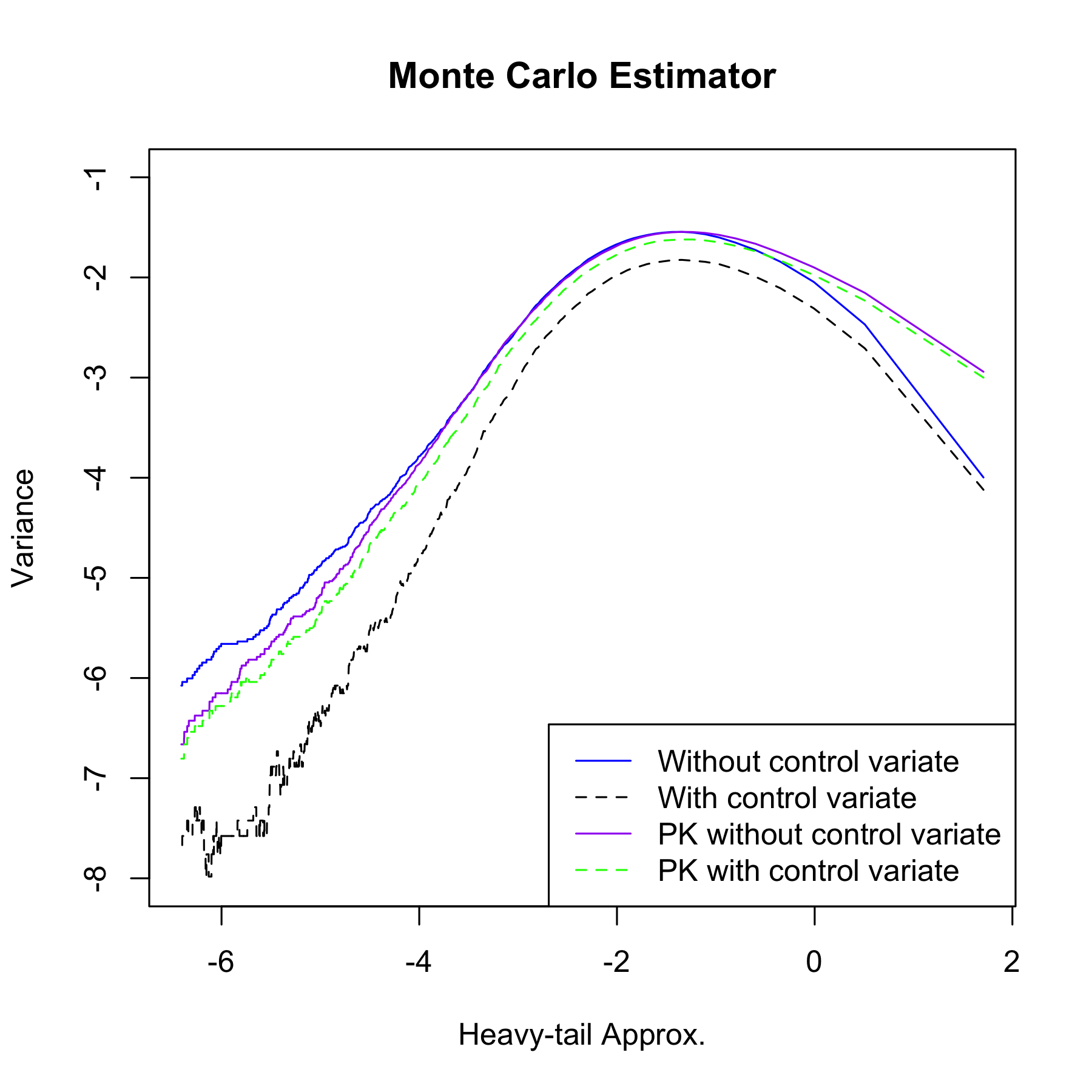}}
        \caption{The simulated ruin probability with MC estimator \eqref{Equation: Definition for the unknonw random variable for the control variate} (blue solid line together with its 95\% (blue solid line) and the control variate extension \eqref{est1} (black dashed line) against the heavy\-/tail approximation \eqref{heavyt}, plotted in a log-log scale. The corresponding estimates based on the PK formula are depicted in pink and dashed green. Model parameters: $\paretoshape=2$, $\perturbationparameter=0.1$, and $\claimratemix=0.99$. The respective empirical variances are presented on the right.}\label{Figure.MC}
\end{figure}

\begin{figure}[h]
        \center{ \includegraphics[width=0.48\textwidth]{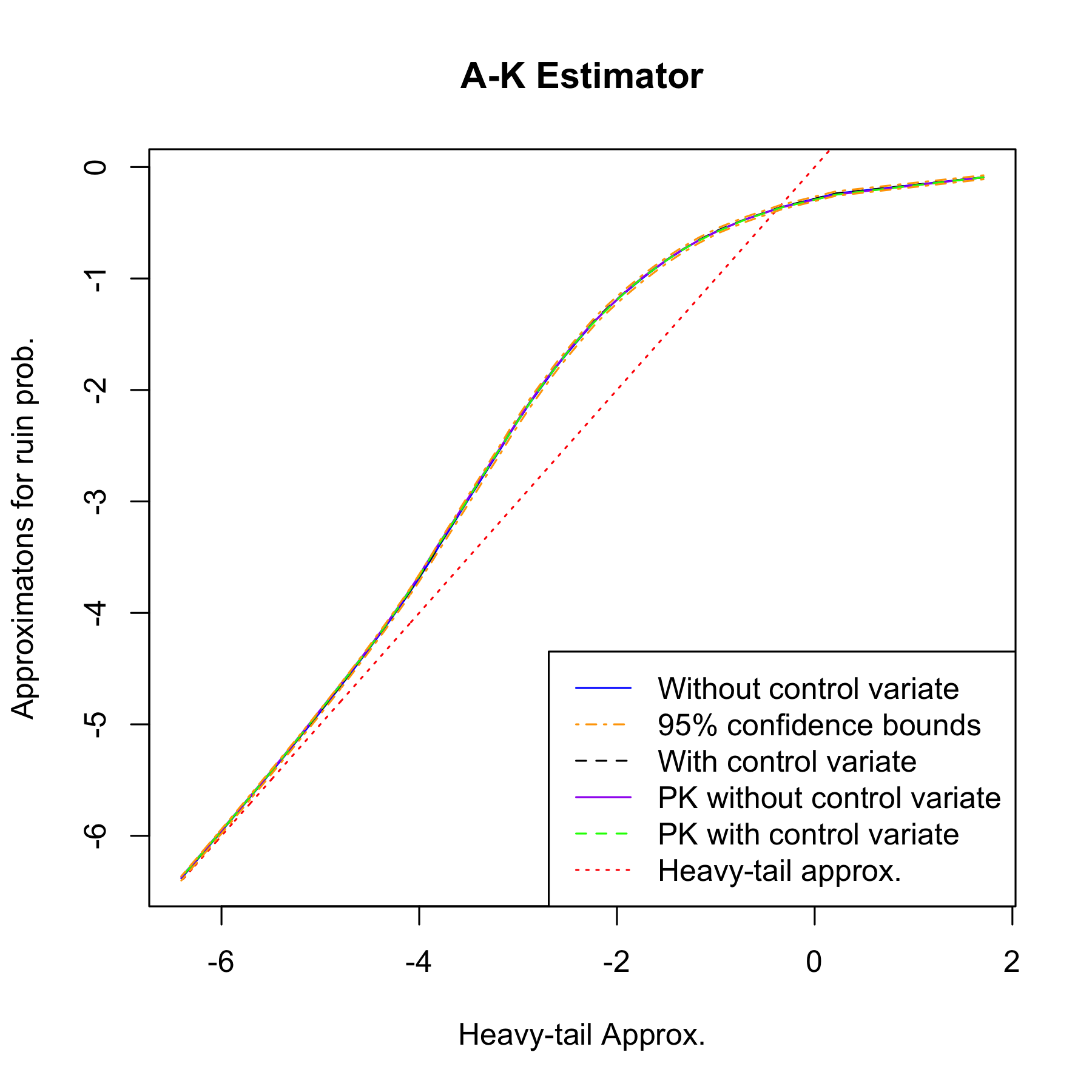} \hspace{0.2cm} \includegraphics[width=0.48\textwidth]{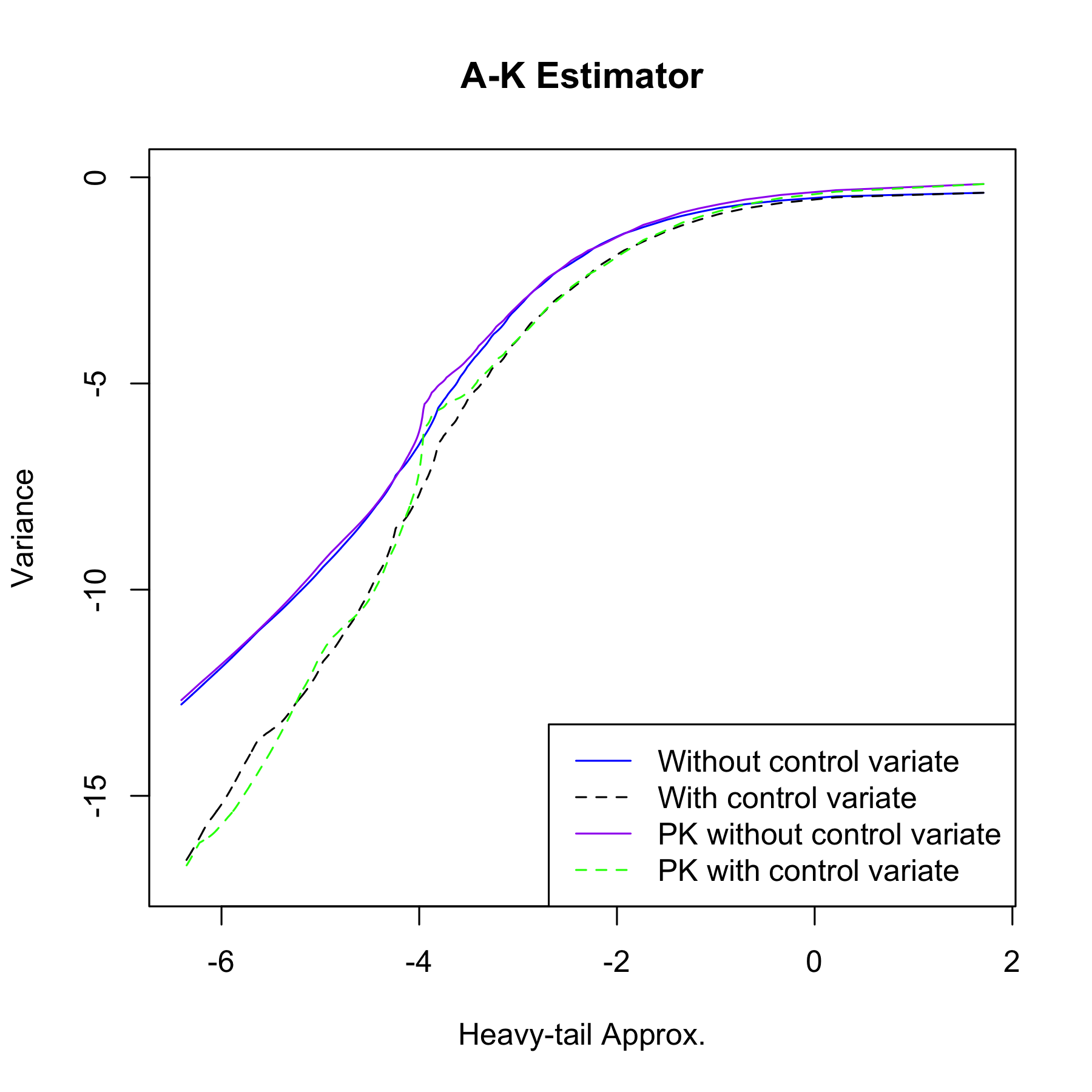}}
        \caption{The simulated ruin probability with the AK estimator (blue solid line) and its control variate extension  \eqref{xxf} (black dashed line) against the heavy\-/tail approximation \eqref{heavyt}, plotted in a log-log scale. The corresponding estimates based on the PK formula are depicted in pink and dashed green. Model parameters: $\paretoshape=2$, $\perturbationparameter=0.1$, and $\claimratemix=0.99$.  The respective empirical variances are presented on the right. }\label{Figure.AK}
\end{figure}

\begin{figure}[h]
        \center{ \includegraphics[width=0.48\textwidth]{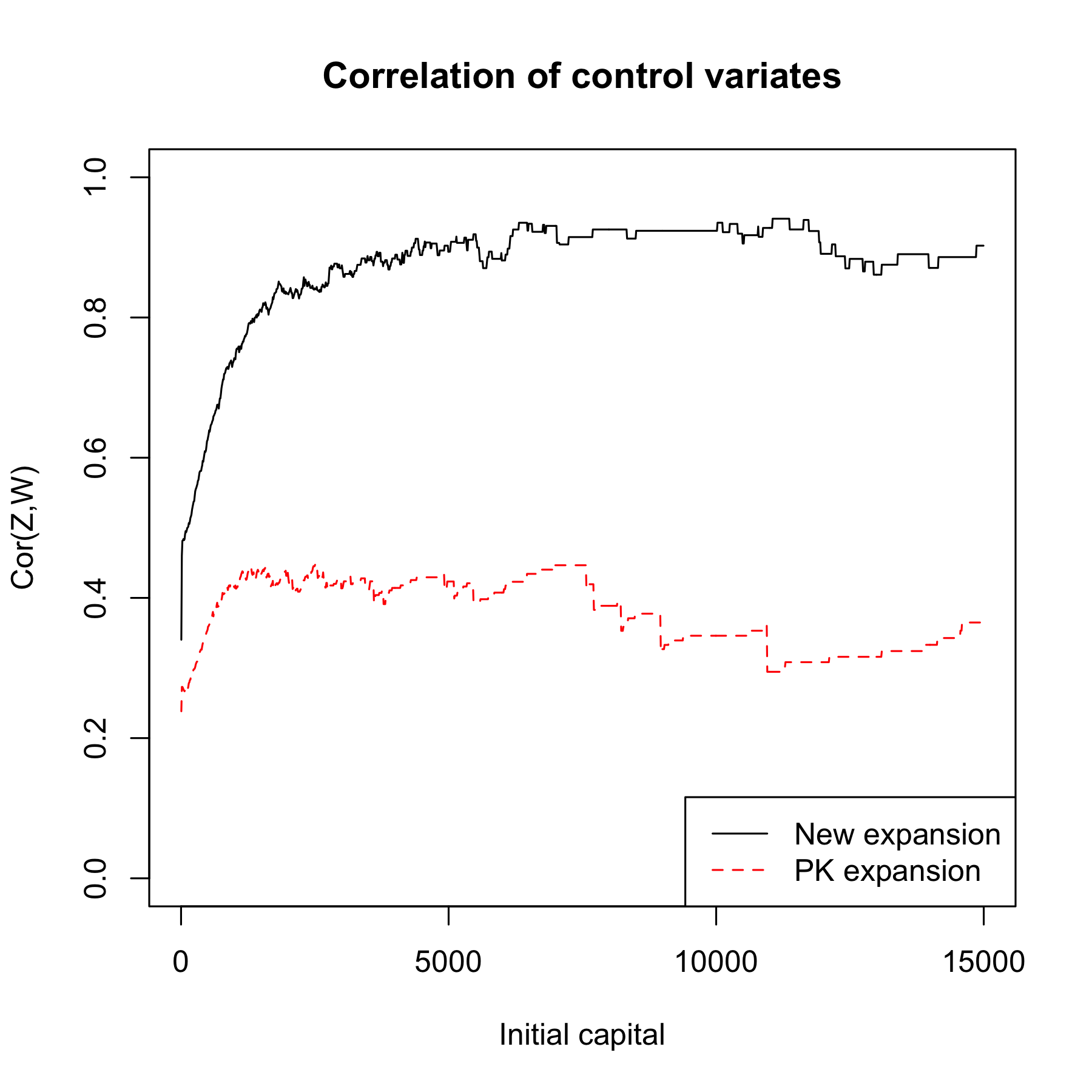} \hspace{0.2cm} \includegraphics[width=0.48\textwidth]{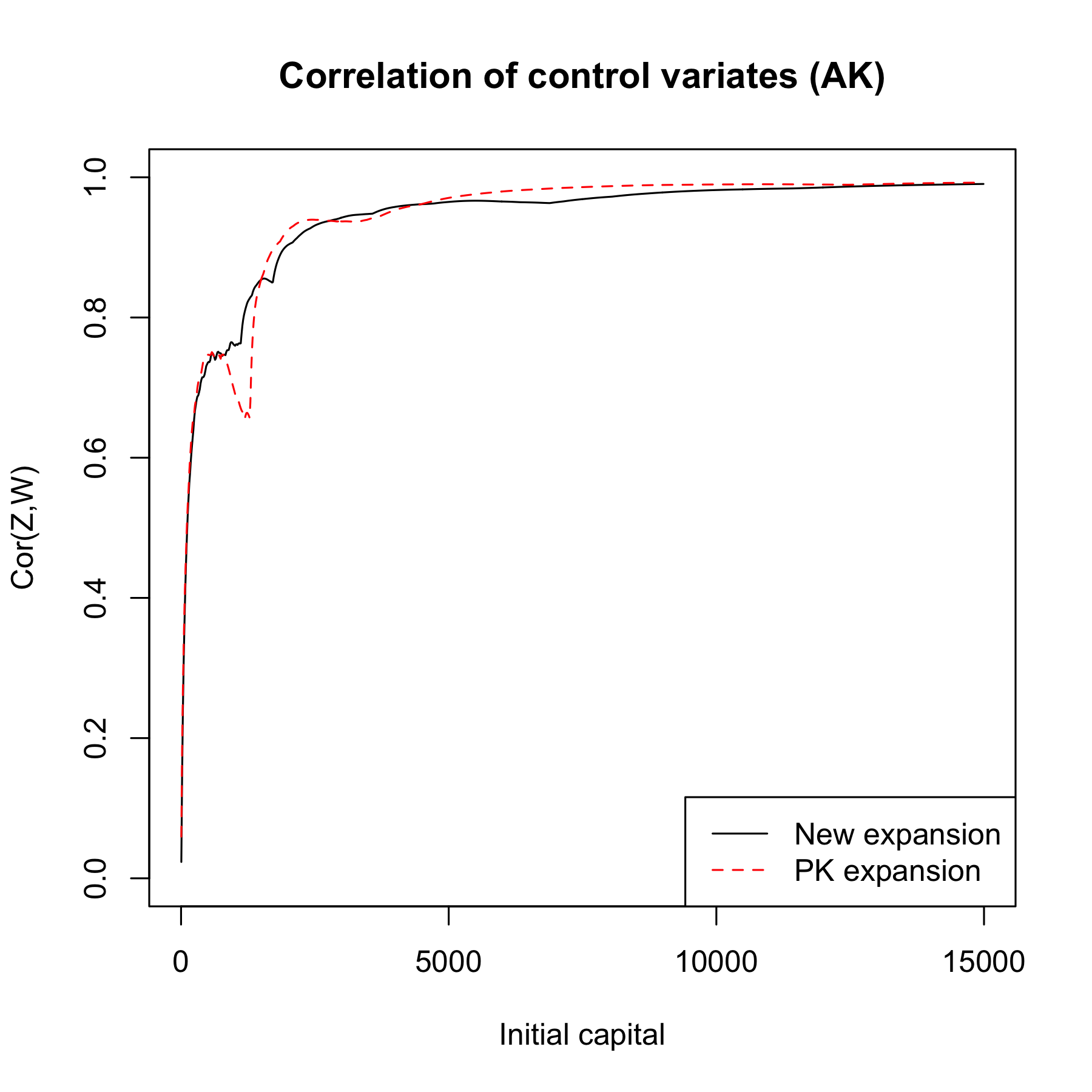}}
        \caption{The correlation of the control variate using the new series expansion (black solid line) and the classical one (red dashed line) for the traditional Monte Carlo approach (\Cref{Section: Variance reduction}, left panel) and for the AK approach (\Cref{Section: Approach 2 -- Conditional Monte Carlo}, right panel).  Model parameters: $\paretoshape=2$, $\perturbationparameter=0.1$, and $\claimratemix=0.99$. }\label{Figure.cor}
\end{figure}

\section{Conclusion}\label{Section: Conclusions}
In this paper, we introduced an alternative series expansion for the PK formula in the Cram\'{e}r\-/Lundberg model for the case when claims are mixtures of distributions with heavy and light tails. We showed that this can give rise to a significant improvement of simulation algorithms based on this series, both for large and small values of initial capital. 

When using the AK conditional Monte Carlo technique, the new series representation performs similarly as the original one based on the PK formula. Both these AK procedures (and particularly their control variate extensions w.r.t.\ \geometric) have a significantly lower variance for a fixed simulation size when compared to the method of \Cref{Section: Approach 1 -- Max of heavy tails}. However, the AK estimator is quite slow because it has to evaluate an improper integral in every iteration for the chosen mixture model. Hence, whenever time matters, the first simulation method based on \eqref{est1} is preferred, and there our new series is a significant improvement. The latter is particularly the case also in the heavy\-/traffic regime where simulation is known to be difficult. In addition, the performance is quite convincing also for moderate and low initial capital.

In addition, it is hard or even impossible to use the AK estimator when the distribution of $\supremumsurplusdiscard[1]+\excesshtclaimsize[1]$ is not known explicitly. On the other hand, our estimator can be used even if the probability $\pr(\supremumsurplusdiscard[0]+\supremumsurplusdiscard[1]+\excesshtclaimsize[1] > \initialcapital)$ cannot be calculated in a closed form. In such cases, one can simply simulate that latter probability as well and adapt the theoretical results in \Cref{Section: Properties of the approximation,Section: Variance reduction} accordingly.

In addition, although we concretely considered a mixture of a phase\-/type and a subexponential distribution in this paper, the results still hold if we replace \phclaimsizedistribution by any distribution for which $\ruindiscard = \pr(\supremumsurplusdiscard[0] > \initialcapital)$ has a closed form, e.g.\ matrix\-/exponential distributions (cf.\ \cite{bladt-MEDAP}). In addition, one can further modify our approach in order to evaluate \ruindiscard via simulation for any other light\-/tailed distribution, which is known to produce effortlessly reliable simulation outputs. 

Finally, we would like to point out that the ruin probability of the more general Sparre Andersen model also has a Pollaczek\-/Khinchine\-/type formula with respect to the ladder height distribution (\cite[Ch.VI]{asmussen-RP}). Our estimator is also valid for this model as long as the ladder height distribution can be found explicitly, which is for instance the case when the inter\-/occurrence times belong to the class of distributions with rational Laplace transform.

\section*{Acknowledgements}
H.A.\ and E.V.\ acknowledge financial support from the Swiss National Science Foundation Project 200021\_168993.


\phantomsection
\addcontentsline{toc}{section}{References}
\printbibliography

\end{document}